

\documentclass[preprint,12pt]{elsarticleHF}




\usepackage{amssymb}

\usepackage{amsmath, amsthm}
\newtheorem{theorem}{Theorem}

\newproof{pf}{Proof}
\newtheorem{remark}{Remark}
\newtheorem{corollary}{Corollary}
\newtheorem{proposition}{Proposition}
\newtheorem{example}{Example}

\usepackage{color}
\usepackage{csquotes}

\usepackage{graphicx}
\usepackage{xcolor}
\usepackage{tikz}
\usepackage{latexsym, amsmath}
\usetikzlibrary{matrix}
\usetikzlibrary{intersections}
\usetikzlibrary{calc}
\usetikzlibrary{positioning}
\usetikzlibrary{arrows}
\usetikzlibrary{fit}
\usepackage{textpos}


\journal{LAA}

\begin{document}

\begin{frontmatter}



\title{On vector spaces of linearizations for matrix polynomials in orthogonal bases}


\author[label1]{Heike Fa\ss bender}
\address[label1]{Institut \emph{Computational Mathematics}/ AG Numerik, TU Braunschweig, Pockelsstr. 14, 38106 Braunschweig, Germany}
\cortext[cor1]{Corresponding author, Email philip.saltenberger@tu-braunschweig.de}
\author[label1]{Philip Saltenberger\corref{cor1}}

\begin{abstract}
Regular and singular matrix
polynomials $P(\lambda) = \sum_{i=0}^k P_i\phi_i(\lambda), P_i \in \mathbb{R}^{n \times n}$ given in an
orthogonal basis $\phi_0(\lambda), \phi_1(\lambda), \ldots, \phi_k(\lambda)$ are considered. Following the ideas in \cite{MacMMM06}, the 
 vector spaces, called $\mathbb{M}_1(P)$, $\mathbb{M}_2(P)$
and $\mathbb{DM}(P)$, of potential linearizations for $P(\lambda)$ are
analyzed. All pencils in $\mathbb{M}_1(P)$ are characterized concisely.
Moreover, several easy to check criteria whether a pencil in $\mathbb{M}_1(P)$ is a (strong) linearization of
$P(\lambda)$ 
are given. The equivalence of some of them to the Z-rank-condition \cite{MacMMM06} is pointed out.
Results on the vector space dimensions, the genericity of linearizations in $\mathbb{M}_1(P)$ and the form of block-symmetric
pencils are derived in a new way on a basic algebraic level. Moreover, an extension of these results to degree-graded bases is presented. Throughout the paper, structural resemblances between
the matrix pencils in $\mathbb{L}_1$, i.e. the results obtained in \cite{MacMMM06}, and their generalized versions are pointed
out.
\end{abstract}

\begin{keyword}
matrix polynomial \sep (strong) linearization \sep orthogonal basis \sep block-symmetry \sep ansatz space \sep
structure-preserving linearization \sep Z-rank condition


\MSC[2010] 65F15 \sep 15A03 \sep 15A18 \sep 15A22 
\end{keyword}

\end{frontmatter}



\section{Introduction}
Linearization of matrix polynomials expressed in standard and nonstan\-dard bases have received much attention in recent years.
In the ground-breaking paper \cite{MacMMM06} vector spaces 
of possible linearizations of matrix polynomials have been
introduced. These turned out to build an elegant framework to find and construct linearizations for square matrix polynomials as
well as to study their algebraical and analytical properties. While the paper \cite{MacMMM06} is mainly concerned with the characterization
and analysis of these spaces for matrix polynomials in the standard monomial basis, recently the research on matrix polynomials
and linearizations expressed in nonstandard polynomial bases has received more attention, see, e.g., \cite{AmiCL09,TerDM09,
FassSCheb16,EffK12,LawP16,NakNT12,NofP15,RobVVD16}.

This paper is devoted to the study of regular and singular matrix polynomials
$P(\lambda) = \sum_{i=0}^k P_i\phi_i(\lambda),$ $P_i \in
\mathbb{R}^{n \times n}$
 expressed in an orthogonal basis $\{ \phi_i (\lambda) \}_{i=0}^k$, generalizing most concepts from \cite{MacMMM06}
to this special case. In particular, we will consider the set $\mathbb{M}_1(P)$ of all $kn \times kn$ matrix pencils $\mathcal{L}(\lambda)$ satisfying
\[
{\mathcal L}(\lambda) (\Phi_k(\lambda)\otimes I_n) = v \otimes P(\lambda)
\]
with $\Phi_k(\lambda) := [\phi_{k-1} \cdots \phi_1 ~~\phi_0]^T.$
For the monomial basis,
this is just the definition of $\mathbb{L}_1(P)$ \cite[Definition 3.1]{MacMMM06} with
$\Phi_k(\lambda) = [ \, \lambda^{k-1} \; \cdots \; \lambda \; 1 \, ]^T =:\Lambda_k(\lambda).$
The same kind of generalization of $\mathbb{L}_1(P)$ to matrix polynomials in nonstandard bases has been already
considered, e.g.,  in \cite{TerDM09, NakNT12}. We will give an explicit characterization of the elements of
$\mathbb{M}_1(P)$ that enables us to formulate our results readily accessible providing quite short proofs.  Moreover, we
show how to easily construct linearizations by means of an intuitive and readily checked linearization condition.
Clearly, most of our findings are equivalent to already known results.
Thus our main contribution here is a new view
aiming to open up new perspectives on the structure of
ansatz spaces in general and present even well-known facts in a new livery.
A second main goal is to present the facts in a concise and succinct manner keeping the proofs on a basic algebraic level
without drawing on deeper theoretical
results. We present our results assuming the field underlying our derivations are
the real numbers $\mathbb{R}$. However, we expect that most of the concepts immediately extend (appropriately adjusted) to arbitrary fields $\mathbb{F}$, in particular to the complex numbers $\mathbb{C}$.

In Section \ref{sec2} the basic notation used and some well-known results are summarized. In Section \ref{sec3}, generalized
ansatz spaces for orthogonal bases are defined and their basic properties are proven. Section
\ref{sec:eigenvectorrecovery} is concerned with the eigenvector recovery, while in Section \ref{sec:singular} singular matrix polynomials are considered.
The extension of the double ansatz space
from \cite{MacMMM06} to orthogonal bases is the subject of Section \ref{sec4}, whereas Section \ref{sec5} provides a construction
algorithm for block-symmetric pencils. Section \ref{sec:eigenvectorexclusion} presents a partial generalization of
the eigenvalue exclusion theorem, while Section \ref{sec:polbases} is dedicated to the question how the results presented up to Section \ref{sec:eigenvectorexclusion} may be derived when an arbitrary degree-graded polynomial basis is considered instead of an orthogonal basis. In Section \ref{sec6} some concluding remarks are given.

\section{Preliminaries and Basic Notation}\label{sec2}
For $\mathbb{R}[\lambda]$, the ring of real polynomials in the variable $\lambda$, the $n \times n$ matrix
ring over $\mathbb{R}[\lambda]$ is denoted by $\mathbb{R}[\lambda]^{n \times n}$. Its elements are referred to as matrix
polynomials. Notice that $\mathbb{R}[\lambda]^{n \times n}$ is a vector space over $\mathbb{R}$.
We consider matrix polynomials $P(\lambda) \in \mathbb{R}[\lambda]^{n \times n}$ expressed in polynomial bases $\Phi = \lbrace
\phi_j(\lambda)
\rbrace_{j=0}^{\infty}$ that follow a three-term recurrence relation. In particular we assume that
\begin{equation}\label{def_orthbasis}
 \alpha_{j} \phi_{j+1}(\lambda) = (\lambda - \beta_j) \phi_j(\lambda) - \gamma_j \phi_{j-1}(\lambda) \qquad j \geq 0
\end{equation}
for some coefficients $\alpha_j \neq 0, \beta_j, \gamma_j \in \mathbb{R}$ and $\phi_{-1}(\lambda)=0, \phi_0(\lambda)=1$.
Popular special
cases include the monomials, Newton and Chebyshev bases or the Legendre basis. Moreover, we usually
assume that $P(\lambda) \in \mathbb{R}[\lambda]^{n \times n}$ may be expressed as
\begin{equation} P(\lambda) = P_k \phi_k(\lambda) + P_{k-1} \phi_{k-1}(\lambda) + \cdots + P_1 \phi_1(\lambda) + P_0
\phi_0(\lambda) \label{expr_P} \end{equation}
with $P_k \neq 0$. In this case $P(\lambda)$ is said to have degree $k$, i.e. $\textnormal{deg}(P(\lambda))=k$. A matrix
polynomial with $\textnormal{det}(P(\lambda)) \neq 0$ is called regular, otherwise it is called singular. Moreover, matrix
polynomials of degree one are called matrix pencils.

Suppose $P(\lambda) \in \mathbb{R}[\lambda]^{n \times n}$ is regular. Then any scalar $\alpha \in \mathbb{C}$ such that $P(\alpha) \in \mathbb{C}^{n \times n}$ is singular is called a finite eigenvalue of
$P(\lambda)$. The corresponding eigenspace is defined to be $\textnormal{null}(P(\alpha))$, i.e. the nullspace of
$P(\alpha)$. For any $n \times n$ matrix $A,$ $\mathcal{N}_r(A)$ denotes the right
nullspace of $A$, i.e. the set of all $x \in \mathbb{C}^n$ satisfying $Ax=0$, whereas $\mathcal{N}_\ell(A)$ is the set
of all $x \in \mathbb{C}^n$ that satisfy $x^TA=0$. If $P(\lambda)$ is a singular matrix polynomial, then the left and right nullspaces of $P(\lambda)$ (\cite[Def. 2.1]{TerDM09}) are defined as
\begin{align*}
\mathcal{N}_\ell(P) &= \big\lbrace x(\lambda) \in \mathbb{R}(\lambda)^n \; \big| \; x^T(\lambda)P(\lambda) = 0 \big\rbrace \\
\mathcal{N}_r(P) &= \big\lbrace y(\lambda) \in \mathbb{R}(\lambda)^n \; \big| \; P(\lambda)y(\lambda) = 0 \big\rbrace.
\end{align*}
Here $\mathbb{R}(\lambda)$ denotes the field of rational functions over $\mathbb{R}$.

Whenever $P(\lambda) \in \mathbb{R}[\lambda]^{n \times n}$ has degree $k$, the reversal of $P(\lambda)$ is the
matrix polynomial
$$ \textnormal{rev}_k(P(\lambda)) := \lambda^k P \left( \tfrac{1}{\lambda} \right)$$
of which it can be proven that its nonzero finite eigenvalues are the reciprocals of those of
$P(\lambda).$ Moreover, if zero is an eigenvalue of $\textnormal{rev}_k(P(\lambda))$, we say that $\infty$ is an eigenvalue of
$P(\lambda)$.

Assume $P(\lambda) \in \mathbb{R}[\lambda]^{n \times n}$ has degree $k$. Then a $kn \times kn$ matrix pencil
$\mathcal{L}(\lambda)=X \lambda + Y$ is called a linearization for $P(\lambda)$
if there exist two matrix polynomials $U(\lambda), V(\lambda) \in \mathbb{R}[\lambda]^{kn \times kn}$ with nonzero, real
determinants such that
 \[
U(\lambda) \mathcal{L}(\lambda) V(\lambda) = \left[ \begin{array}{c|c} P(\lambda) &
\begin{array}{ccc} 0_n & \cdots & 0_n \end{array} \\ \hline \begin{array}{c} 0_n \\ \vdots \\ 0_n \end{array} & I_{(k-1)n}
\end{array} \right]
\]
holds. Here $I_n$ denotes the $n \times n$ identity matrix, whereas $0_n$ is the $n \times n$ matrix of all zeros.
A linearization $\mathcal{L}(\lambda)$ for $P(\lambda)$ is called strong whenever $\textnormal{rev}_1(\mathcal{L}(\lambda))$ is a
linearization for $\textnormal{rev}_k(P(\lambda))$ as well. In case $\mathcal{L}(\lambda)$ is a strong linearization of a
matrix polynomial $P(\lambda)$, $\mathcal{L}(\lambda)$ and $P(\lambda)$ share the same finite and infinite eigenvalues with the same algebraic and
geometric multiplicities. Moreover, if $V$ is a nonsingular square matrix of appropriate dimension and $\mathcal{L}(\lambda)$ is
a strong linearization, then $V\mathcal{L}(\lambda)$ is a strong linearization as well. The matrix pencils $V
\mathcal{L}(\lambda)$ and $\mathcal{L}(\lambda)$ are usually called (strongly) equivalent.

Whenever a $kn \times kn$ matrix pencil $\mathcal{L}(\lambda)$
may be expressed as
\begin{equation} \mathcal{L}(\lambda) = \sum_{i,j=1}^k e_ie_j^T \otimes
\mathcal{L}_{ij}(\lambda) \label{def_blocksym} \end{equation}
for certain $n \times n$ matrices $\mathcal{L}_{ij}(\lambda)$, we call
$\mathcal{L}(\lambda)^{\mathcal{B}} = \sum_{i,j=1}^k e_je_i^T \otimes \mathcal{L}_{ij}(\lambda)$ the block-transpose of
$\mathcal{L}(\lambda)$ (see \cite[Definition 2.1]{HigMMT06}).
Therefore, if $\mathcal{L}(\lambda)$ of the form (\ref{def_blocksym}) satisfies
$\mathcal{L}(\lambda) = \mathcal{L}(\lambda)^{\mathcal{B}}$ it is called block-symmetric, whereas it is called
block-skew-symmetric
whenever $\mathcal{L}(\lambda) = - \mathcal{L}(\lambda)^{\mathcal{B}}.$ For the $s \times s$ leading principal submatrix of a matrix
polynomial $P(\lambda)$ we use the notation $[P(\lambda)]_s$. Using \textsc{Matlab} notation this means
$[P(\lambda)]_s = (P(\lambda))(1:s,1:s)$.

\section{Generalized Ansatz Spaces}\label{sec3}

Whenever this is not further specified, $P(\lambda) \in \mathbb{R}[\lambda]^{n \times n}$ is  a (regular or
singular)
matrix polynomial expressed in an orthogonal basis as in 
(\ref{expr_P}) with $\textnormal{deg}(P(\lambda)) = k \geq 2$. 
We make this assumption to avoid the potential occurrence
of pathological cases.
Furthermore, the main purpose of this paper is to construct linearizations for $P(\lambda)$ which is superfluous when
$P(\lambda)$ is already linear.

For $P(\lambda)$ as in (\ref{expr_P}) we define $\Phi_k(\lambda) := [ \,
\phi_{k-1}(\lambda) \; \cdots \; \phi_1(\lambda) \;
\phi_0(\lambda) \, ]^T$ and consider the set $\mathbb{M}_1(P)$ of all $kn \times kn$ matrix pencils
$\mathcal{L}(\lambda)$ satisfying
\begin{equation}  \mathcal{L}(\lambda) \big( \Phi_{k}(\lambda) \otimes I_n \big) = v \otimes P(\lambda) \label{ansatzequation}
\end{equation}
for some \enquote{ansatz vector} $v \in \mathbb{R}^k$.
For the standard monomial basis
this is just the definition of $\mathbb{L}_1(P)$ \cite[Def. 3.1]{MacMMM06} with
$\Phi_k(\lambda) = [ \, \lambda^{k-1} \; \cdots \; \lambda \; 1 \, ]^T =: \Lambda_k(\lambda).$
The same kind of generalization of $\mathbb{L}_1(P)$ to matrix polynomials in nonstandard bases has been
considered, e.g.,  in
\cite{NakNT12, TerDM09}.

 Certainly, $\mathbb{M}_1(P)$ is a vector space over $\mathbb{R}$.
Next, we introduce the
$n \times kn$ rectangular matrix pencil
$$ m_\Phi^P(\lambda) := \begin{bmatrix} \frac{(\lambda - \beta_{k-1})}{\alpha_{k-1}} P_k + P_{k-1} & P_{k-2} -
\frac{\gamma_{k-1}}{\alpha_{k-1}} P_k & P_{k-3} & \cdots & P_1 & P_0 \end{bmatrix}. $$
It is easily seen that $m_\Phi^P(\lambda)( \Phi_k(\lambda) \otimes I_n) = P(\lambda)$.
Moreover, for the $(k-1) \times k$ matrix pencil
$$ M^\star_\Phi(\lambda) = \begin{bmatrix} -\alpha_{k-2} & ( \lambda -\beta_{k-2}) & -\gamma_{k-2} & & & \\ & -\alpha_{k-3} &
( \lambda - \beta_{k-3}) &
-\gamma_{k-3}
& & \\ & & \ddots & \ddots & \ddots & \\ & & & -\alpha_1 & ( \lambda - \beta_1) & -\gamma_1 \\ & & & & -\alpha_0 & ( \lambda
- \beta_0) \end{bmatrix}  $$
we have $M_\Phi^\star(\lambda) \Phi_k(\lambda) = 0.$ Note that $M_{\Phi}(\lambda)$ depends only on the basis chosen, while $m_{\Phi}^P(\lambda)$ depends additionally on the matrix polynomial $P(\lambda)$.
Now we define
\[M_\Phi(\lambda) := M^\star_\Phi(\lambda) \otimes I_n.
\]
Certainly $M_\Phi(\lambda)( \Phi_k(\lambda) \otimes I_n) =0$ holds.
We set
\begin{equation}
F_{\Phi}^P(\lambda) := \begin{bmatrix} m_\Phi^P(\lambda) \\ M_\Phi(\lambda) \end{bmatrix} \in
\mathbb{R}[\lambda]^{kn \times kn}.
\label{stronglin_F} \end{equation}
By construction
\[
F_{\Phi}^P(\lambda) ( \Phi_k(\lambda) \otimes I_n)  = e_1 \otimes P(\lambda),
\]
thus, $F_{\Phi}^P(\lambda) \in \mathbb{M}_1(P)$ with ansatz vector
$e_1 \in \mathbb{R}^k$.
According to \cite[Thm. 2]{AmiCL09} $F_{\Phi}^P(\lambda)$ is a strong linearization for any regular $P(\lambda)$.
In \cite[Section 7]{TerDM09} it was observed that this also holds for any singular $P(\lambda)$.
 In fact, $F_{\Phi}^P(\lambda)$ may be utilized as an \enquote{anchor pencil} to construct
$\mathbb{M}_1(P)$. To this end, the next
theorem gives a concise and succinct characterization of $\mathbb{M}_1(P)$ for any matrix polynomial $P(\lambda)$
expressed in some orthogonal polynomial basis.

\begin{theorem}[Characterization of $\mathbb{M}_1(P)$] \label{thm_master1}
Let $P(\lambda)$ be an $n \times n$ regular or singular matrix polynomial of degree $k \geq 2.$ Then
$\mathcal{L}(\lambda) \in \mathbb{M}_1(P)$ with ansatz vector $v \in \mathbb{R}^k$ if
and only if
\begin{equation}  \mathcal{L}(\lambda) = \big[ \, v \otimes I_n \;~ B \, \big] F_{\Phi}^P(\lambda) \label{thm_charM1}
\end{equation}
for some matrix $B \in \mathbb{R}^{kn \times (k-1)n}$.
\end{theorem}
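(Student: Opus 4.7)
The proof splits into the standard two directions. The \emph{if} direction is immediate: for any $B \in \mathbb{R}^{kn \times (k-1)n}$, applying $[v \otimes I_n \; B]F_{\Phi}^P(\lambda)$ to $\Phi_k(\lambda) \otimes I_n$ and using the already-established identity $F_{\Phi}^P(\lambda)(\Phi_k(\lambda) \otimes I_n) = e_1 \otimes P(\lambda)$ yields $[v \otimes I_n \; B](e_1 \otimes P(\lambda)) = v \otimes P(\lambda)$, as required.

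For the \emph{only if} direction, I would first reduce to a kernel statement. Given $\mathcal{L}(\lambda) \in \mathbb{M}_1(P)$ with ansatz vector $v$, subtract the concrete candidate $(v \otimes I_n) m_{\Phi}^P(\lambda) = [v \otimes I_n \; 0] F_{\Phi}^P(\lambda)$ to obtain a pencil $\tilde{\mathcal{L}}(\lambda) := \mathcal{L}(\lambda) - (v \otimes I_n) m_{\Phi}^P(\lambda)$ that satisfies $\tilde{\mathcal{L}}(\lambda)(\Phi_k(\lambda) \otimes I_n) = 0$, since $m_{\Phi}^P(\lambda)(\Phi_k(\lambda) \otimes I_n) = P(\lambda)$. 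The theorem thus reduces to showing that every $kn \times kn$ pencil annihilating $\Phi_k(\lambda) \otimes I_n$ is of the form $B M_{\Phi}(\lambda)$ for a constant matrix $B \in \mathbb{R}^{kn \times (k-1)n}$.

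To prove the kernel claim, I would split $\tilde{\mathcal{L}}(\lambda) = A_1 \lambda + A_0$ with $A_0, A_1 \in \mathbb{R}^{kn \times kn}$, partition each coefficient into $k$ block columns $(A_{\ell})^{(1)}, \ldots, (A_{\ell})^{(k)}$ of width $n$, and rewrite the annihilation condition as $\sum_{i=1}^k \phi_{k-i}(\lambda)\big(\lambda (A_1)^{(i)} + (A_0)^{(i)}\big) = 0$. Substituting the recurrence $\lambda \phi_s(\lambda) = \alpha_s \phi_{s+1}(\lambda) + \beta_s \phi_s(\lambda) + \gamma_s \phi_{s-1}(\lambda)$ and comparing coefficients in the linearly independent family $\{\phi_0(\lambda), \ldots, \phi_k(\lambda)\}$ yields the following: the coefficient of $\phi_k$ forces $(A_1)^{(1)} = 0$ (since $\alpha_{k-1} \neq 0$), while the remaining equations express each $(A_0)^{(c)}$ as an explicit combination of $(A_1)^{(c-1)}, (A_1)^{(c)}, (A_1)^{(c+1)}$ with coefficients $-\gamma_{k-c+1}, -\beta_{k-c}, -\alpha_{k-c-1}$, under the boundary conventions $(A_1)^{(0)} = (A_1)^{(k+1)} = 0$. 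Defining $B := [(A_1)^{(2)} \; (A_1)^{(3)} \; \cdots \; (A_1)^{(k)}] \in \mathbb{R}^{kn \times (k-1)n}$ and inspecting the staircase placement of $-\alpha_j, -\beta_j, -\gamma_j$ inside $M_{\Phi}^{\star}(\lambda)$, one verifies that these block-column relations are exactly the entry-by-entry identity $\tilde{\mathcal{L}}(\lambda) = B(M_{\Phi}^{\star}(\lambda) \otimes I_n) = B M_{\Phi}(\lambda)$.

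The only real obstacle is careful index bookkeeping: handling the boundary terms where the recurrence introduces the new basis element $\phi_k$ at one end and drops the phantom $\phi_{-1} = 0$ at the other, and aligning these with the band structure of $M_{\Phi}^{\star}(\lambda)$. Crucially, no invertibility of $F_{\Phi}^P(\lambda)$ over $\mathbb{R}(\lambda)$ and no appeal to deeper theory is required, so the argument covers regular and singular $P(\lambda)$ uniformly, in keeping with the paper's stated goal of staying on a basic algebraic level.
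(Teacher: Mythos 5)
Your proof is correct, and your forward direction coincides with the paper's. For the converse, however, you take a genuinely different route. The paper reads off from the coefficient of $\phi_k(\lambda)$ that the leading block column of $\mathcal{L}(\lambda)=X\lambda+Y$ is forced to be $v\otimes\alpha_{k-1}^{-1}P_k$, builds the candidate $\mathcal{L}^\star(\lambda)=[\, v\otimes I_n \;~ \mathcal{L}_1\,]F_{\Phi}^P(\lambda)$ with $\mathcal{L}_1$ taken from the remaining block columns of $X$, and then observes that $\mathcal{L}(\lambda)-\mathcal{L}^\star(\lambda)$ is a \emph{constant} matrix annihilating $\Phi_k(\lambda)\otimes I_n$, hence zero by linear independence of $\phi_0,\dots,\phi_{k-1}$; no analysis of the constant coefficient is needed. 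You instead subtract only the $v$-part $(v\otimes I_n)m_{\Phi}^P(\lambda)$ and prove the stronger structural lemma that every degree-one annihilator of $\Phi_k(\lambda)\otimes I_n$ equals $BM_{\Phi}(\lambda)$ for a constant $B$, via a full three-term coefficient comparison. This costs more index bookkeeping (the relations $(A_0)^{(c)}=-\gamma_{k-c+1}(A_1)^{(c-1)}-\beta_{k-c}(A_1)^{(c)}-\alpha_{k-c-1}(A_1)^{(c+1)}$ together with the boundary case $\alpha_{k-1}(A_1)^{(1)}=0$, all of which do check out against the band structure of $M^\star_{\Phi}(\lambda)$), but it buys an explicit, reusable description of the pencil kernel of right-multiplication by $\Phi_k(\lambda)\otimes I_n$ --- essentially the degree-one dual-minimal-bases fact invoked later in Section~\ref{sec:polbases}. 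Both arguments are elementary, stay at the level of coefficient comparison in the linearly independent family $\phi_0,\dots,\phi_k$, and apply uniformly to regular and singular $P(\lambda)$.
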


\begin{proof}
It is immediate that any matrix pencil $\mathcal{L}(\lambda) = [ \, v \otimes I_n \;~ B \, ]F_{\Phi}^P(\lambda)$ satisfies
(\ref{ansatzequation}) since
\begin{align*}
\big( \big[ \, v \otimes I_n \;~ B \, \big]F_{\Phi}^P(\lambda) \big) \big( \Phi_{k}(\lambda) \otimes I_n \big)
&= \big[ \, v \otimes I_n \;~ B \, \big] \big( e_1 \otimes P(\lambda) \big) \\ &= v\otimes P(\lambda).
\end{align*}
Now let $\mathcal{L}(\lambda) \in \mathbb{M}_1(P),$
thus, $ \mathcal{L}(\lambda) \big( \Phi_{k}(\lambda) \otimes I_n \big) = v \otimes P(\lambda)$ has to hold.
As $v \otimes P(\lambda) = \sum_{i=0}^k ( v  \otimes P_i \phi_i(\lambda))$
it follows that $\mathcal{L}(\lambda) \big( \Phi_{k}(\lambda) \otimes I_n \big)$ has to generate the term
$v \otimes P_k\phi_k(\lambda)$ on the right hand side of (\ref{ansatzequation}). Since $\phi_k(\lambda)$ is not an
entry of $\Phi_k(\lambda)$ and $\phi_k(\lambda)$ has degree $k$, i.e. contains a nonzero term with $\lambda^k$, we need to have $\lambda \phi_{k-1}(\lambda)$ to obtain $\lambda$ with potency $k$. To properly generate $P_k \phi_k(\lambda)$ from $\lambda \phi_{k-1}(\lambda)$ we use
the recurrence relation (\ref{def_orthbasis})
\[
v \otimes P_k \phi_k(\lambda) = v \otimes \big( \alpha_{k-1}^{-1} \big( ( \lambda - \beta_{k-1} ) \phi_{k-1}(\lambda) -
\gamma_{k-1} \phi_{k-2}(\lambda) \big) P_k \big).
\]
It gives that $\mathcal{L}(\lambda)$ may be expressed as
$$ \mathcal{L}(\lambda) = \big[ v \otimes \alpha_{k-1}^{-1} P_k \;~ \mathcal{L}_1 \, \big] \lambda + \big[ \, \ell^\star \;
 \; \mathcal{L}_0 \, \big]$$
for some matrices $
\ell^\star \in \mathbb{R}^{kn \times n}$
and $\mathcal{L}_1, \mathcal{L}_0 \in \mathbb{R}^{kn \times (k-1)n}$. Now observe that $\mathcal{L}^\star(\lambda) := [ \, v
\otimes I_n \;~ \mathcal{L}_1 \, ]F_{\Phi}^P(\lambda)$ has the form
$$ \mathcal{L}^\star(\lambda) = \big[ v \otimes \alpha_{k-1}^{-1} P_k \;~ \mathcal{L}_1 \, \big] \lambda + \big[ v \otimes
I_n \;~ \mathcal{L}_1 \, \big]F_{\Phi}^P(0)$$
as
\[
F_\Phi^P(\lambda) = F_\Phi^P(0)+
\begin{bmatrix}
\frac{\lambda}{\alpha_{k-1}}P_k & 0 & \cdots & 0\\
0 & \\
\vdots & & \lambda I_{(k-1)n}\\
0
\end{bmatrix}.
\]
Thus $\Delta \mathcal{L}(\lambda) := \mathcal{L}(\lambda) - \mathcal{L}^\star(\lambda) \in \mathbb{R}^{kn \times kn}$, i.e. it is
independent of $\lambda$. Moreover, $\Delta \mathcal{L}(\lambda)$ satisfies $\Delta \mathcal{L}(\lambda) ( \Phi_k(\lambda)
\otimes I_n) = 0$. Since $\phi_0(\lambda),
\ldots , \phi_{k-1}(\lambda), \lambda \phi_{k-1}(\lambda)$ form a basis of $\mathbb{R}_k[\lambda]$, the vector space of real
polynomials of degree $\leq k$, this implies $\Delta
\mathcal{L} = 0$ and proves that $\mathcal{L}(\lambda) = \mathcal{L}^\star(\lambda)$.
\end{proof}

In other words, Theorem \ref{thm_master1} states that\footnote{Although we confine ourselves to the case of
matrix polynomials of degree $k \geq 2$ notice that for linear matrix polynomials, $\mathbb{M}_1(P)$ simply
consists of all scalar multiples of $P(\lambda)$ itself.}
$$\mathbb{M}_1(P) = \left\lbrace \big[ \, v \otimes I_n \;~ B \, \big]F_{\Phi}^P(\lambda) \; \big| \; v \in \mathbb{R}^k, B \in
\mathbb{R}^{kn \times (k-1)n} \right\rbrace.$$
In case $\Phi_k(\lambda) = \Lambda_k(\lambda)$ denotes the monomial basis, $F_{\Phi}^P(\lambda)$ is just the first Frobenius
companion form for $P(\lambda)$ \cite[(3.1)]{MacMMM06} and $\mathbb{M}_1(P) = \mathbb{L}_1(P).$
The description of $\mathbb{L}_1(P)$ in \cite[Lem. 3.4,
Thm. 3.5]{MacMMM06} differs from (\ref{thm_charM1}) significantly although both characterizations are easily seen to be
equivalent.

Beside (\ref{ansatzequation}) we may consider its transposed version
\begin{equation}
 \big( \Phi_k(\lambda)^T \otimes I_n \big) \mathcal{L}(\lambda) = v^T \otimes P(\lambda). \label{ansatzequation2}
\end{equation}
As before, all matrix pencils satisfying (\ref{ansatzequation2}) form a vector space over $\mathbb{R}$, which we denote by
$\mathbb{M}_2(P)$. For the monomial basis $\mathbb{M}_2(P) = \mathbb{L}_2(P),$ see \cite[Def. 3.9]{MacMMM06}. It is
characterized analogously to Theorem \ref{thm_master1}.

\begin{theorem}[Characterization of $\mathbb{M}_2(P)$] \label{thm_master2}
Let $P(\lambda)$ be an $n \times n$ regular or singular matrix polynomial of degree $k \geq 2.$ Then
$\mathcal{L}(\lambda) \in \mathbb{M}_2(P)$ with ansatz vector $v \in \mathbb{R}^k$ if
and only if
\begin{equation} \mathcal{L}(\lambda) =  F_{\Phi}^P(\lambda)^{\mathcal{B}} \begin{bmatrix} v^T \otimes
I_n \\ B^{\mathcal{B}} \end{bmatrix} \label{thm_charM2} \end{equation}
for some matrix $B \in \mathbb{R}^{kn \times (k-1)n}$.
\end{theorem}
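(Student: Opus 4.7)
The plan is to mirror Theorem~\ref{thm_master1} by exploiting the block-transpose duality between $\mathbb{M}_1(P)$ and $\mathbb{M}_2(P)$. Writing any $kn \times kn$ pencil as $\mathcal{L}(\lambda) = \sum_{i,j=1}^{k} e_i e_j^T \otimes \mathcal{L}_{ij}(\lambda)$, a direct block-wise computation shows that both identities
\[
\bigl(\Phi_k(\lambda)^T \otimes I_n\bigr)\mathcal{L}(\lambda) = v^T \otimes P(\lambda) \quad\text{and}\quad \mathcal{L}(\lambda)^{\mathcal{B}}\bigl(\Phi_k(\lambda) \otimes I_n\bigr) = v \otimes P(\lambda)
\]
reduce to exactly the same $k$ block equations $\sum_{i=1}^{k}\phi_{k-i}(\lambda)\mathcal{L}_{ij}(\lambda) = v_j P(\lambda)$ for $j=1,\ldots,k$. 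Hence $\mathcal{L}(\lambda) \in \mathbb{M}_2(P)$ with ansatz vector $v$ if and only if $\mathcal{L}(\lambda)^{\mathcal{B}} \in \mathbb{M}_1(P)$ with the same ansatz vector $v$; in particular $F_{\Phi}^P(\lambda)^{\mathcal{B}} \in \mathbb{M}_2(P)$ with ansatz vector $e_1$.

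Combining this equivalence with Theorem~\ref{thm_master1}, I would deduce that $\mathcal{L}(\lambda) \in \mathbb{M}_2(P)$ precisely when $\mathcal{L}(\lambda)^{\mathcal{B}} = \bigl[\,v\otimes I_n\;~ B\,\bigr] F_{\Phi}^P(\lambda)$ for some $B \in \mathbb{R}^{kn \times (k-1)n}$. Taking block-transpose and using that $(\cdot)^{\mathcal{B}}$ is an involution, together with the immediate identity $\bigl[v\otimes I_n\;~ B\bigr]^{\mathcal{B}} = \begin{bmatrix} v^T\otimes I_n \\ B^{\mathcal{B}} \end{bmatrix}$, the claim (\ref{thm_charM2}) reduces to establishing the product rule
\[
\bigl(\bigl[v\otimes I_n\;~ B\bigr] F_\Phi^P(\lambda)\bigr)^{\mathcal{B}} = F_\Phi^P(\lambda)^{\mathcal{B}} \begin{bmatrix} v^T\otimes I_n \\ B^{\mathcal{B}} \end{bmatrix}.
\]

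The main obstacle is precisely this product rule, since the general identity $(AC)^{\mathcal{B}} = C^{\mathcal{B}} A^{\mathcal{B}}$ \emph{fails}: comparing blocks one has $((AC)^{\mathcal{B}})_{il} = \sum_j A_{lj}C_{ji}$ whereas $(C^{\mathcal{B}}A^{\mathcal{B}})_{il} = \sum_j C_{ji}A_{lj}$, and generic $n \times n$ blocks do not commute. In our setting, however, every summand involves a commuting pair. The leading block-column of $[v\otimes I_n\;~ B]$ consists of the central matrices $v_l I_n$, which handles all $j=1$ terms. For $j\geq 2$ the relevant blocks of $F_\Phi^P(\lambda)$ lie in its rows below the first, i.e.\ in $M_\Phi(\lambda) = M_\Phi^\star(\lambda)\otimes I_n$, and are therefore themselves scalar multiples of $I_n$; this takes care of the remaining summands. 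Thus the product rule holds in this restricted situation and the ``only if'' direction is complete. The converse --- that any pencil of the form (\ref{thm_charM2}) lies in $\mathbb{M}_2(P)$ with ansatz vector $v$ --- is a short verification: left-multiplying (\ref{thm_charM2}) by $\Phi_k(\lambda)^T \otimes I_n$ and using the anchor relation $(\Phi_k(\lambda)^T \otimes I_n)F_\Phi^P(\lambda)^{\mathcal{B}} = e_1^T \otimes P(\lambda)$ noted above yields $v^T \otimes P(\lambda)$.
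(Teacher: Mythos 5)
Your proof is correct, but it takes a different route from the one the paper intends. The paper gives no explicit proof of Theorem~\ref{thm_master2}; it simply asserts that $\mathbb{M}_2(P)$ "is characterized analogously to Theorem~\ref{thm_master1}," i.e.\ the intended argument repeats the degree-count and linear-independence reasoning of Theorem~\ref{thm_master1} with $\Phi_k(\lambda)^T\otimes I_n$ acting from the left. You instead reduce the statement to Theorem~\ref{thm_master1} via the duality $\mathcal{L}(\lambda)\in\mathbb{M}_2(P)\iff\mathcal{L}(\lambda)^{\mathcal{B}}\in\mathbb{M}_1(P)$ (with the same ansatz vector), which the paper itself records at the start of Section~\ref{sec4}, and then transport the characterization across the block-transpose. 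The genuine content of your route is the restricted reversal rule $\bigl([\,v\otimes I_n\;~B\,]F_\Phi^P(\lambda)\bigr)^{\mathcal{B}}=F_\Phi^P(\lambda)^{\mathcal{B}}[\,v\otimes I_n\;~B\,]^{\mathcal{B}}$, and you rightly flag that $(AC)^{\mathcal{B}}=C^{\mathcal{B}}A^{\mathcal{B}}$ fails in general; your verification that every summand $A_{lj}C_{ji}$ contains a scalar block (either $v_lI_n$ for $j=1$, or a block of $M_\Phi^\star(\lambda)\otimes I_n$ for $j\geq 2$) is exactly what is needed and is correct. What your approach buys is that the degree/independence argument is not repeated, and it makes explicit a product rule that the paper later uses silently (e.g.\ in (\ref{blocksym1}) and in Corollary~\ref{cor_DMpencils}); what it costs is precisely the need to justify that rule. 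The converse direction via the anchor relation $(\Phi_k(\lambda)^T\otimes I_n)F_\Phi^P(\lambda)^{\mathcal{B}}=e_1^T\otimes P(\lambda)$ is also correct.
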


Since any pencil $\mathcal{L}(\lambda)$ of the form (\ref{thm_charM1}) or (\ref{thm_charM2}) can be uniquely identified with the
tuple $(v,B)$ we obtain the isomorphism
\begin{align*}
 \mathbb{M}_1(P) \cong \mathbb{R}^k \times
\mathbb{R}^{kn \times (k-1)n} \cong \mathbb{M}_2(P).
\end{align*}
This isomorphism was also observed in the proof of \cite[Thm. 4.4]{TerDM09} in the context of matrix polynomials in the monomial basis.

\begin{corollary} \label{cor_dimension}
 For any $n \times n$ regular or singular matrix polynomial $P(\lambda)$ of degree $k$
$$\textnormal{dim} \, \mathbb{M}_1(P) = \textnormal{dim} \, \mathbb{M}_2(P) = k(k-1)n^2 +k.$$
\end{corollary}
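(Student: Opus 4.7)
The plan is to upgrade the surjective linear parameterizations furnished by Theorems~\ref{thm_master1} and~\ref{thm_master2},
\[
\Psi_1(v,B) = \bigl[\,v\otimes I_n \;\; B\,\bigr] F_\Phi^P(\lambda), \qquad \Psi_2(v,B) = F_\Phi^P(\lambda)^{\mathcal{B}} \begin{bmatrix} v^T\otimes I_n \\ B^{\mathcal{B}} \end{bmatrix},
\]
defined on $\mathbb{R}^k \times \mathbb{R}^{kn \times (k-1)n}$, into $\mathbb{R}$-linear isomorphisms onto $\mathbb{M}_1(P)$ and $\mathbb{M}_2(P)$ respectively. Once this is done, the common dimension is read off the parameter space and equals $k + k(k-1)n^2 = k(k-1)n^2 + k$, as claimed.

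Surjectivity of both maps is granted by Theorems~\ref{thm_master1} and~\ref{thm_master2}, and linearity in $(v,B)$ is evident, so the entire content of the corollary reduces to verifying injectivity. My strategy is to inspect the leading coefficient of $F_\Phi^P(\lambda) = A_1\lambda + A_0$: a direct unpacking of $m_\Phi^P(\lambda)$ and $M_\Phi(\lambda) = M_\Phi^\star(\lambda)\otimes I_n$ shows that in $m_\Phi^P(\lambda)$ only the leading $n\times n$ block carries $\lambda$, namely $\alpha_{k-1}^{-1}P_k$, while the $\lambda$-coefficient of $M_\Phi^\star(\lambda)$ is $[\,0\;\;I_{k-1}\,]$, yielding the block-diagonal matrix
\[
A_1 = \begin{bmatrix} \alpha_{k-1}^{-1} P_k & 0 \\ 0 & I_{(k-1)n} \end{bmatrix}.
\]

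For $\Psi_1$, assuming $\Psi_1(v,B) = 0$ forces $[\,v\otimes I_n\;\;B\,]A_1 = 0$: the right block column immediately yields $B=0$, while the left yields $v\otimes P_k = 0$, which forces $v=0$ since $P_k \neq 0$. The argument for $\Psi_2$ is completely parallel, using that $A_1^{\mathcal{B}} = A_1$ because $A_1$ is block-diagonal, so the analogous leading-coefficient inspection gives $B^{\mathcal{B}} = 0$ and $v^T\otimes P_k = 0$. The only (and very mild) obstacle is the small bookkeeping verification that $A_1$ has the stated block-diagonal form; once that is in hand, injectivity is a one-line Kronecker-product argument relying solely on $P_k \neq 0$.
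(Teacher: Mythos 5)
Your proposal is correct and follows essentially the same route as the paper: the paper also deduces the dimension from the bijective correspondence $\mathcal{L}(\lambda) \leftrightarrow (v,B)$ of Theorems~\ref{thm_master1} and~\ref{thm_master2}, with injectivity justified by exactly your observation that the $\lambda$-coefficient of $F_\Phi^P(\lambda)$ is $\operatorname{diag}(\alpha_{k-1}^{-1}P_k,\, I_{(k-1)n})$, so that $v$ and $B$ can be read off the leading coefficient of $\mathcal{L}(\lambda)$. Your write-up merely makes explicit the injectivity step that the paper leaves implicit.
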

Corollary \ref{cor_dimension} is essentially \cite[Cor. 3.6]{MacMMM06} for the monomial basis.
We now give a universal
linearization condition for matrix pencils in $\mathbb{M}_1(P)$ and $\mathbb{M}_2(P)$ that does not depend on
the chosen basis at all.

\begin{corollary} \label{cor_lincondition1}
 Let $P(\lambda)$ be an $n \times n$ regular or singular matrix polynomial of degree $k \geq 2$ and $\mathcal{L}(\lambda) \in
\mathbb{M}_1(P)$ of the
form (\ref{thm_charM1}) or $\mathcal{L}(\lambda) \in \mathbb{M}_2(P)$ of the form (\ref{thm_charM2}). Then the following statements hold:
\begin{enumerate}
 \item The matrix pencil $\mathcal{L}(\lambda)$ is a strong linearization for $P(\lambda)$ if
 \begin{equation} \textnormal{rank} \big( \big[ v \otimes I_n \;~ B \, \big] \big) = kn \label{cor_lincondition}
\end{equation}
 holds regardless whether $P(\lambda)$ is regular or singular. Certainly, (\ref{cor_lincondition}) is equivalent to $\big[ v \otimes I_n \;~ B \, \big] \in
\textnormal{GL}_{kn}( \mathbb{R})$.
\item If $P(\lambda)$ is a regular matrix polynomial and $\mathcal{L}(\lambda)$ a linearization for $P(\lambda)$, then
the rank condition (\ref{cor_lincondition}) is satisfied.
 \end{enumerate}
\end{corollary}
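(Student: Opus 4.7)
The strategy is to use $F_\Phi^P(\lambda)$ as an anchor—exploiting that it is already known to be a strong linearization of $P(\lambda)$—and then transfer this property along the factorisation (\ref{thm_charM1}) by elementary rank considerations. For part~(1), recall from the discussion preceding Theorem~\ref{thm_master1} that $F_\Phi^P(\lambda)$ is a strong linearization of $P(\lambda)$ for both regular and singular $P(\lambda)$ (see \cite{AmiCL09, TerDM09}). Since left-multiplying a strong linearization by a matrix $V \in \textnormal{GL}_{kn}(\mathbb{R})$ produces a strongly equivalent pencil—a fact recorded in Section~\ref{sec2}—the choice $V := [\,v \otimes I_n \;\; B\,]$ combined with (\ref{cor_lincondition}) immediately yields that $\mathcal{L}(\lambda) = V F_\Phi^P(\lambda)$ is itself a strong linearization of $P(\lambda)$. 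The $\mathbb{M}_2(P)$ case is handled symmetrically after verifying that $F_\Phi^P(\lambda)^\mathcal{B}$ is also a strong linearization of $P(\lambda)$ (by an argument mirroring the construction of $F_\Phi^P(\lambda)$) and then right-multiplying by the square matrix appearing in (\ref{thm_charM2}), whose rank equals that of $[\,v \otimes I_n \;\; B\,]$ up to perfect-shuffle permutations.

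For part~(2), I would argue by contraposition. Suppose $\textnormal{rank}[\,v \otimes I_n \;\; B\,] < kn$; then there exists a nonzero $w \in \mathbb{R}^{kn}$ with $w^T [\,v \otimes I_n \;\; B\,] = 0$, and hence
\[
w^T \mathcal{L}(\lambda) \;=\; w^T [\,v \otimes I_n \;\; B\,] F_\Phi^P(\lambda) \;\equiv\; 0.
\]
The rows of $\mathcal{L}(\lambda)$ are therefore linearly dependent over $\mathbb{R}$, so $\det \mathcal{L}(\lambda) \equiv 0$. On the other hand, the definition of a linearization supplies polynomial matrices $U(\lambda), V(\lambda)$ with nonzero real determinants such that $U(\lambda)\mathcal{L}(\lambda)V(\lambda) = \textnormal{diag}(P(\lambda), I_{(k-1)n})$; taking determinants and invoking the regularity of $P(\lambda)$ forces $\det \mathcal{L}(\lambda) \not\equiv 0$, the desired contradiction. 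A dual argument with a right null vector of the parameter matrix in (\ref{thm_charM2}) produces a common right null vector of $\mathcal{L}(\lambda)$ for all $\lambda$ and settles the $\mathbb{M}_2(P)$ case in the same way.

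The main subtlety I anticipate lies in the $\mathbb{M}_2(P)$ half of part~(1): one must carefully distinguish the block-transpose from the ordinary transpose to ensure that $F_\Phi^P(\lambda)^\mathcal{B}$ linearizes $P(\lambda)$ itself rather than $P(\lambda)^T$. It may be cleanest to record this as a brief auxiliary observation just before the corollary. Beyond that point, the remaining steps reduce either to invertible left-/right-multiplication preserving strong equivalence or to the elementary determinant contradiction used in part~(2), so no serious obstacle is expected.
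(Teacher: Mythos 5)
Your proposal is correct and follows essentially the same route as the paper's proof: part (1) is obtained by noting that $\mathcal{L}(\lambda)$ is strongly equivalent to the anchor pencil $F_{\Phi}^P(\lambda)$ when $[\,v \otimes I_n \;\; B\,]$ is nonsingular, and part (2) by observing that rank deficiency forces $\det \mathcal{L}(\lambda) \equiv 0$, which is incompatible with linearizing a regular $P(\lambda)$. The additional care you take with the $\mathbb{M}_2(P)$ case and the block-transpose of $F_{\Phi}^P(\lambda)$ is a detail the paper leaves implicit, but it does not change the argument.
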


Notice that any pencil $\mathcal{L}(\lambda) \in \mathbb{M}_1(P)$ or $\mathcal{L}(\lambda) \in \mathbb{M}_2(P)$
that does not satisfy the condition (\ref{cor_lincondition}) is automatically singular.

\begin{proof}
\begin{enumerate}
\item Whenever $\textnormal{rank} ( [ v \otimes I_n \;~ B \, ] ) = kn$, $\mathcal{L}(\lambda) = [
\, v \otimes I_n \;~ B ]F_{\Phi}^P(\lambda)$ is strongly equivalent to $F_{\Phi}^P(\lambda)$ and thus a strong linearization
for $P(\lambda)$.
\item If $\textnormal{rank} ( [ v \otimes I_n \;~ B \, ] ) < kn$, $\mathcal{L}(\lambda)$ is
singular and therefore not a linearization for any regular $P(\lambda)$.
\end{enumerate} \vspace*{-0.8cm}
\end{proof}

Corollary \ref{cor_lincondition1}.1 is essentially just a reformulation of
\cite[Thm. 4.1]{MacMMM06} for the monomial basis $\Phi = \Lambda = \lbrace \lambda^j \rbrace_{j=0}^{\infty}$.
To see this, assume that $\mathcal{L}(\lambda) \in \mathbb{L}_1(P)$ is as
in (\ref{thm_charM1}) and notice that $F_{\Phi}^P(\lambda) = F_{\Lambda}^P(\lambda)$ is simply the first Frobenius companion form for $P(\lambda)$. Now let $M \in \mathbb{R}^{k \times k}$ be a nonsingular matrix that satisfies $Mv=e_1$. Premultiplying $\mathcal{L}(\lambda)$ with $M \otimes I_n$ yields
$$ \begin{aligned} \mathcal{L}^\star(\lambda)= \big( M \otimes I_n \big) \big[ \, v \otimes I_n \;~ B \, \big]
F_{\Lambda}^P(\lambda) &= \left[\begin{array}{c|c} e_1 \otimes I_n & (M \otimes I_n)B \end{array} \right]
F_{\Lambda}^P(\lambda) \\ &=: \left[ \begin{array}{c|c} I_n &  B_{11}^\star \\  \hline 0 &
B_{21}^\star  \end{array} \right]F_{\Lambda}^P(\lambda) \end{aligned} $$
so $\mathcal{L}^\star(\lambda) \in \mathbb{L}_1(P)$ with ansatz vector $e_1$. Now
$$
\mathcal{L}^\star(\lambda) = \left[ \begin{array}{c|c} P_k & B_{11}^\star \\ \hline
0 & B_{21}^\star \end{array} \right] \lambda + \left[ \begin{array}{c} m_{\Lambda}^P(0) \\ \hline (M \otimes
I_n)BM_{\Lambda}(0) \end{array} \right].
$$
In the form given above $\mathcal{L}^\star(\lambda)$ corresponds to equation (4.2) in \cite{MacMMM06}.
It is said that $\mathcal{L}(\lambda)$ has full $Z$-rank whenever $B_{21}^\star$ has full rank for
any chosen nonsingular matrix $M$ with the property $Mv=e_1$ \cite[Thm. 4.1, Def. 4.3]{TerDM09}. This
is the case if
and only if $[ \, e_1 \otimes I_n \;~ (M \otimes I_n)B \, ]$ has full rank. Since
$$ \big[ \, e_1 \otimes I_n \;~ (M \otimes I_n)B \, \big] = (M \otimes I_n) \big[ \, v \otimes I_n \;~ B \,
\big] $$ and as $M \otimes I_n$ is nonsingular as well, $[ \, e_1 \otimes I_n \;~ (M \otimes I_n)B \, ]$ has full
rank
if and only if $[ \,  v \otimes I_n \;~ B \, ]$ has full rank. We summarize this observation in the next corollary.

\begin{corollary} \label{cor_fullZrank}
Let $P(\lambda)$ be an $n \times n$ matrix polynomial of degree $k \geq 2$ and assume $\mathcal{L}(\lambda) \in \mathbb{L}_1(P)$ is given as
$$ \mathcal{L}(\lambda) = \big[ \, v \otimes I_n \;~ B \, \big] F_{\Lambda}^P(\lambda).$$
Then $\mathcal{L}(\lambda)$ has full $Z$-rank if and only if $\textnormal{rank}([ \, v \otimes I_n \;~ B \, ]) = kn$.
\end{corollary}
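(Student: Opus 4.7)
The plan is to exploit the discussion immediately preceding the corollary, which essentially reduces the $Z$-rank condition to an invertibility statement about $[\,v\otimes I_n \;~ B\,]$ via a change-of-basis argument on the ansatz vector.

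First I would choose any nonsingular $M\in\mathbb{R}^{k\times k}$ with $Mv = e_1$ (such an $M$ exists whenever $v\neq 0$; the case $v=0$ can be handled separately, since then both sides of the claimed equivalence fail). Premultiplying $\mathcal{L}(\lambda) = [\,v\otimes I_n\;~B\,]F_{\Lambda}^{P}(\lambda)$ by the nonsingular matrix $M\otimes I_n$ yields a strongly equivalent pencil $\mathcal{L}^\star(\lambda)\in\mathbb{L}_1(P)$ with ansatz vector $e_1$, of the form
\[
\mathcal{L}^\star(\lambda) = \Big[\,e_1\otimes I_n \;~ (M\otimes I_n)B\,\Big]F_{\Lambda}^{P}(\lambda)
= \left[\begin{array}{c|c} P_k & B_{11}^\star \\\hline 0 & B_{21}^\star \end{array}\right]\lambda + \left[\begin{array}{c} m_{\Lambda}^P(0) \\\hline (M\otimes I_n)B\,M_{\Lambda}(0) \end{array}\right],
\]
which is exactly the normal form (4.2) in \cite{MacMMM06} that is used to define the $Z$-rank. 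By definition, $\mathcal{L}(\lambda)$ has full $Z$-rank precisely when the block $B_{21}^\star$ has full rank for some (equivalently every) such $M$.

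Next I would verify the elementary equivalence
\[
\textnormal{rank}\,B_{21}^\star = (k-1)n \;\Longleftrightarrow\; \textnormal{rank}\Big(\big[\,e_1\otimes I_n \;~ (M\otimes I_n)B\,\big]\Big) = kn,
\]
which follows from the block-triangular structure displayed above: the left $n$ columns are $e_1\otimes I_n$, so the whole matrix has full row rank iff the trailing $(k-1)n$ rows of the right block, which are precisely the rows of $B_{21}^\star$, are linearly independent.

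Finally, since $M\otimes I_n$ is nonsingular, multiplying on the left preserves rank, hence
\[
\textnormal{rank}\Big(\big[\,e_1\otimes I_n \;~ (M\otimes I_n)B\,\big]\Big)
= \textnormal{rank}\Big((M\otimes I_n)\big[\,v\otimes I_n \;~ B\,\big]\Big)
= \textnormal{rank}\Big(\big[\,v\otimes I_n \;~ B\,\big]\Big),
\]
and chaining the two equivalences yields the claim. The only mildly delicate step is the block-triangular rank observation used to pass from $B_{21}^\star$ having full row rank to $[\,e_1\otimes I_n\;~(M\otimes I_n)B\,]$ having full rank, but this is a routine linear-algebra fact; the main conceptual content has already been done in Theorem \ref{thm_master1}, which supplies the canonical form of $\mathcal{L}(\lambda)$ that makes all of the above manipulations transparent.
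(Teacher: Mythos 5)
Your argument is correct and follows essentially the same route as the paper: choose a nonsingular $M$ with $Mv=e_1$, pass to the equivalent pencil with ansatz vector $e_1$, identify full $Z$-rank with full rank of $B_{21}^\star$ (hence of $[\,e_1\otimes I_n \;~ (M\otimes I_n)B\,]$ by the block-triangular structure), and conclude via rank invariance under the nonsingular factor $M\otimes I_n$. Your explicit remark on the degenerate case $v=0$ is a small point the paper leaves implicit, but otherwise the two proofs coincide.
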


Moreover, it can be easily checked that the $Z$-rank-deficiency of a pencil $\mathcal{L}(\lambda) \in \mathbb{L}_1(P)$ carries over to the matrix $[ \,  v \otimes I_n \;~  B \, ]$, i.e. if the $Z$-rank of $\mathcal{L}(\lambda)$ is $s < (k-1)n,$ so its $Z$-rank-deficiency is $t = (k-1)n - s,$ then it follows that $\textnormal{rank}([ \, v \otimes I_n \;~ B \, ]) = kn - t$. Thus, there is in fact no loss of information in considering the rank of the matrix $[ \, v \otimes I_n \;~ B \, ]$ instead of the $Z$-rank.

For nonmonomial bases the linearization condition for pencils in $\mathbb{M}_1(P)$ presented in \cite[Prop. 4.9]{MacMMM05} requires even more work. First the pencil $\mathcal{L}(\lambda)$ has to be transformed into an element of $\mathbb{L}_1(P)$ via a basis change and then, in a second step, it has to be expressed with the
ansatz vector $e_1$ (according to the discussion above) to compute the $Z$-rank.
Fortunately, we may apply Corollary \ref{cor_lincondition1} to the pencils in $\mathbb{M}_1(P)$
(expressed as in (\ref{thm_charM1})) right away without a change of the ansatz vector or the polynomial basis.
Even if the pencil $\mathcal{L}(\lambda) \in \mathbb{M}_1(P)$ is expressed as $\mathcal{L}(\lambda) = X \lambda +
Y$, $v$ and $B$ may easily be recovered to check (\ref{cor_lincondition}) since $\mathcal{L}(\lambda) = [ \, v
\otimes I_n \;~ B \, ]F_{\Phi}^P(\lambda)$ may also be expressed as
$$\mathcal{L}(\lambda) = \big[ \,  v \otimes \alpha_{k-1}^{-1}P_k \;~ B \, ] \lambda + \mathcal{L}(0)$$
that is $v$ and $B$ appear directly in the matrix $X$.

The construction of strong linearizations for
matrix polynomials expressed in the Chebyshev basis proposed in \cite{LawP16} gets along without such conditions.
Recall that in \cite[Thm. 2.1]{NakNT12} the Strong Linearization Theorem from \cite[Thm. 4.3]{MacMMM06} was revisited and proven for all generalized ansatz spaces considering any degree-graded basis (these have been introduced in \cite[Sec. 4.2]{MacMMM06}).
In fact, all three equivalent conditions given there are equivalent
to (\ref{cor_lincondition}) for regular matrix polynomials expressed in orthogonal bases.
This can easily be seen by Corollary \ref{cor_lincondition1}. We state the Strong Linearization Theorem \cite[Thm. 2.1]{NakNT12} according to our discussion adding the equivalent condition from Corollary \ref{cor_lincondition1}.

\begin{theorem}[Strong Linearization Theorem] \label{thm_stronglin}
Let $P(\lambda)$ be an $n \times n$ regular matrix polynomial of degree $k \geq 2$ and $\mathcal{L}(\lambda) \in \mathbb{M}_1(P)$ as given in (\ref{thm_charM1}). Then the following statements are equivalent:
\begin{enumerate}
\item $\mathcal{L}(\lambda)$ is a linearization for $P(\lambda)$
\item $\mathcal{L}(\lambda)$ is a regular matrix pencil.
\item $\mathcal{L}(\lambda)$ is a strong linearization for $P(\lambda).$
\item $\textnormal{rank}([ \, v \otimes I_n \;~ B \, ]) = kn$ (i.e. $\mathcal{L}(\lambda)$ has full $Z$-rank, see Corollary \ref{cor_fullZrank})
\end{enumerate}
\end{theorem}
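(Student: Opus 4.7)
The plan is to close a short cycle among conditions 1, 3, and 4 using Corollary \ref{cor_lincondition1}, and then tie condition 2 into the loop via a determinantal argument and the remark that any pencil in $\mathbb{M}_1(P)$ failing (4) is automatically singular. Since $\mathcal{L}(\lambda)$ already lives in $\mathbb{M}_1(P)$ with a specified representation $\mathcal{L}(\lambda) = [\, v \otimes I_n \;~ B\,] F_\Phi^P(\lambda)$, most of the work has been done upstream, and the theorem is essentially a bookkeeping result.

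Concretely, I would establish the chain $4 \Rightarrow 3 \Rightarrow 1 \Rightarrow 4$ first. The implication $4 \Rightarrow 3$ is precisely Corollary \ref{cor_lincondition1}.1: if $[\, v \otimes I_n \;~ B\,]$ has full rank $kn$, then left-multiplication by its inverse strongly equates $\mathcal{L}(\lambda)$ with the anchor pencil $F_\Phi^P(\lambda)$, which is known to be a strong linearization. The step $3 \Rightarrow 1$ is immediate from the definition of a strong linearization. Finally, $1 \Rightarrow 4$ is Corollary \ref{cor_lincondition1}.2, which was proved via the contrapositive using the regularity of $P(\lambda)$: a rank deficient $[\, v \otimes I_n \;~ B\,]$ forces $\mathcal{L}(\lambda)$ itself to be singular, contradicting the possibility that it linearizes a regular matrix polynomial.

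It remains to insert condition 2 into this cycle. For $1 \Rightarrow 2$, I would use the defining identity $U(\lambda) \mathcal{L}(\lambda) V(\lambda) = \mathrm{diag}(P(\lambda), I_{(k-1)n})$ with $\det U(\lambda)$ and $\det V(\lambda)$ nonzero real constants: taking determinants on both sides gives
\begin{equation*}
\det U(\lambda) \cdot \det \mathcal{L}(\lambda) \cdot \det V(\lambda) = \det P(\lambda) \not\equiv 0,
\end{equation*}
so $\det \mathcal{L}(\lambda)$ does not vanish identically and $\mathcal{L}(\lambda)$ is regular. For $2 \Rightarrow 4$, I would argue contrapositively: if the rank condition fails, the remark immediately following Corollary \ref{cor_lincondition1} states that $\mathcal{L}(\lambda)$ is singular, contradicting 2. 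Combined with the already established cycle, this gives all four equivalences.

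I do not anticipate any genuine obstacle, since every nontrivial piece has been laid out in the preceding corollaries and remarks. The only mild subtlety is ensuring that the determinantal argument in $1 \Rightarrow 2$ really produces a nonzero polynomial; this uses the fact that $\det U(\lambda), \det V(\lambda) \in \mathbb{R} \setminus \{0\}$ by definition of linearization (since $U$ and $V$ have nonzero real determinants as stated in Section \ref{sec2}) together with regularity of $P(\lambda)$. Once this is in place, the proof reduces to a diagram-chase citing Corollary \ref{cor_lincondition1} and the subsequent remark.
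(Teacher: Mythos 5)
Your argument is correct and complete, but it differs from the paper in an interesting way: the paper gives no proof of this theorem at all. It cites the Strong Linearization Theorem of Nakatsukasa, Noferini and Townsend (\cite[Thm.~2.1]{NakNT12}) for the equivalence of conditions 1--3 and merely remarks that condition 4 can ``easily be seen'' to be equivalent via Corollary \ref{cor_lincondition1}. You instead rederive the entire four-way equivalence from the paper's own machinery: the cycle $4 \Rightarrow 3 \Rightarrow 1 \Rightarrow 4$ from the two parts of Corollary \ref{cor_lincondition1}, then $1 \Rightarrow 2$ by taking determinants in $U(\lambda)\mathcal{L}(\lambda)V(\lambda) = \mathrm{diag}(P(\lambda), I_{(k-1)n})$ (correctly using that $\det U(\lambda), \det V(\lambda)$ are nonzero real constants and $\det P(\lambda) \not\equiv 0$), and $2 \Rightarrow 4$ from the remark that a rank-deficient $[\, v\otimes I_n \;~ B\,]$ forces $\det\mathcal{L}(\lambda) \equiv 0$. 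This is arguably more in the spirit of the paper's stated aim of keeping proofs on a basic algebraic level without outsourcing to deeper results; the only cosmetic redundancy is that your direct $1 \Rightarrow 4$ and your composite $1 \Rightarrow 2 \Rightarrow 4$ encode essentially the same contrapositive, so the chain $4 \Rightarrow 3 \Rightarrow 1 \Rightarrow 2 \Rightarrow 4$ alone would have sufficed.
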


For a discussion of linearizations for singular polynomials in non-monomial bases see
\cite[Sec. 7]{TerDM09}.
Since almost every matrix of the form $[ \, v \otimes I_n \;~ B \, ]$ has full rank, we obtain the following genericity
statement. This result was already stated in \cite[Thm. 4.7]{MacMMM06} for matrix polynomials $P(\lambda)$ in the monomial basis.

\begin{corollary} \label{cor_generic}
For any $n \times n$ regular or singular matrix polynomial $P(\lambda)$ almost every matrix pencil in
$\mathbb{M}_1(P)$ is a strong linearization for $P(\lambda)$.
\end{corollary}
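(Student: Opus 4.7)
The plan is to reduce the genericity claim to a statement about the non-vanishing of a polynomial in the parameters $(v,B)$. By Theorem \ref{thm_master1}, every element of $\mathbb{M}_1(P)$ is uniquely of the form $\mathcal{L}(\lambda) = [\, v \otimes I_n \;~ B \,] F_\Phi^P(\lambda)$ with $(v,B) \in \mathbb{R}^k \times \mathbb{R}^{kn \times (k-1)n}$, so $\mathbb{M}_1(P)$ is naturally identified with the affine space $\mathbb{R}^{k + kn(k-1)n}$. Using this identification, the notion of \emph{almost every} pencil in $\mathbb{M}_1(P)$ is well-defined: it means the complement of a set of Lebesgue measure zero, or equivalently (in the sense of \cite[Thm. 4.7]{MacMMM06}) the complement of a proper algebraic subvariety.

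By Corollary \ref{cor_lincondition1}.1, the pencil $\mathcal{L}(\lambda)$ is a strong linearization for $P(\lambda)$ as soon as $[\, v \otimes I_n \;~ B \,] \in \mathrm{GL}_{kn}(\mathbb{R})$. The key observation is that the entries of the $kn \times kn$ matrix $[\, v \otimes I_n \;~ B \,]$ are linear (hence polynomial) functions of the parameters $(v,B)$, so its determinant
\begin{equation*}
p(v,B) := \det\bigl([\, v \otimes I_n \;~ B \,]\bigr)
\end{equation*}
is a polynomial in the coordinates of the parameter space. Consequently, the set on which $\mathcal{L}(\lambda)$ \emph{fails} to be a strong linearization is contained in the algebraic set $\{ p(v,B) = 0 \}$.

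It remains to verify that $p$ is not identically zero, so that its vanishing locus is a proper algebraic subvariety (and hence both nowhere dense and of Lebesgue measure zero). For this it suffices to exhibit one admissible pair $(v_0,B_0)$ with $p(v_0,B_0) \neq 0$. Choosing $v_0 = e_1 \in \mathbb{R}^k$ and $B_0 = \bigl[\begin{smallmatrix} 0_{n \times (k-1)n} \\ I_{(k-1)n} \end{smallmatrix}\bigr]$ yields $[\, v_0 \otimes I_n \;~ B_0 \,] = I_{kn}$, whose determinant equals $1$. The corresponding pencil is precisely $F_\Phi^P(\lambda)$, which is indeed a strong linearization of $P(\lambda)$, consistent with our claim. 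Hence $p \not\equiv 0$, its zero set is a proper algebraic subvariety of the parameter space, and almost every pencil in $\mathbb{M}_1(P)$ satisfies the rank condition of Corollary \ref{cor_lincondition1}.1 and is therefore a strong linearization for $P(\lambda)$.

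I do not anticipate any genuine obstacle: once Theorem \ref{thm_master1} gives the clean parametrization and Corollary \ref{cor_lincondition1} provides the algebraic (rank) characterization of strong linearizations, the proof reduces to the standard fact that the complement of a proper algebraic set is generic, and the only point requiring a concrete witness — the non-vanishing of $p$ — is settled by the anchor pencil $F_\Phi^P(\lambda)$ itself.
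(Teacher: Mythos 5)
Your argument is correct and is exactly the paper's (implicit) argument: the paper dispenses with the corollary via the remark that almost every matrix $[\, v \otimes I_n \;~ B \,]$ has full rank, combined with Corollary \ref{cor_lincondition1}.1, and your proof simply makes this precise by observing that the determinant is a polynomial in $(v,B)$ not vanishing identically (witnessed by the anchor pencil). No issues.
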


Certainly, an analogous statement to Corollary \ref{cor_generic} holds for $\mathbb{M}_2(P)$.

\section{The Recovery of right Eigenvectors}
\label{sec:eigenvectorrecovery}

We now show how eigenvectors for regular $P(\lambda)$ as in (\ref{expr_P}) may be recovered from eigenvectors of linearizations in
$ \mathbb{M}_1(P)$. The main ideas behind this derivation follow mainly the approach in \cite{MacMMM06} and
\cite[Sec. 5]{TerDM09}. However, at first we show that we can restrict the study of eigenvectors essentially to
$F_{\Phi}^P(\lambda).$ This yields a new kind of linearization condition for pencils in $\mathbb{M}_1(P)$ and
$\mathbb{M}_2(P)$ respectively.
\begin{proposition} \label{prop_eigrec1}
Let $P(\lambda)$ be an $n \times n$ regular matrix polynomial of degree $k \geq 2$ and $\mathcal{L}(\lambda) \in
\mathbb{M}_1(P)$. Then the following statements hold:
\begin{enumerate}
 \item Every right eigenvector of $F_{\Phi}^P(\lambda)$
(for any eigenvalue) is a right eigenvector of $\mathcal{L}(\lambda).$
\item Let $\mathcal{L}(\lambda)$ be a (strong) linearization for $P(\lambda)$. Then every right eigenvector of  $\mathcal{L}(\lambda)$ (for any eigenvalue) is a right eigenvector of $F_{\Phi}^P(\lambda).$
\end{enumerate}
\end{proposition}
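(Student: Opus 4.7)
My strategy is to exploit the concrete characterization of $\mathbb{M}_1(P)$ from Theorem~\ref{thm_master1}, which factors every $\mathcal{L}(\lambda) \in \mathbb{M}_1(P)$ as
\[
\mathcal{L}(\lambda) \;=\; \bigl[\, v\otimes I_n \;~ B \,\bigr]\, F_\Phi^P(\lambda).
\]
This reduces both statements to elementary linear algebra about the constant $kn \times kn$ matrix $\bigl[\, v\otimes I_n \;~ B \,\bigr]$. Specifically, for any finite eigenvalue $\alpha$ of $F_\Phi^P$ and any nonzero $x$ with $F_\Phi^P(\alpha)x = 0$, direct substitution gives $\mathcal{L}(\alpha)x = \bigl[\, v\otimes I_n \;~ B \,\bigr]\cdot 0 = 0$, which immediately establishes part~1 for finite eigenvalues.

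For part~2, I would invoke Corollary~\ref{cor_lincondition1}.2: since $P(\lambda)$ is regular and $\mathcal{L}(\lambda)$ is a linearization, $\bigl[\, v\otimes I_n \;~ B \,\bigr]$ has full rank $kn$, so being square it is invertible. Then $\mathcal{L}(\alpha)x = 0$ implies $\bigl[\, v\otimes I_n \;~ B \,\bigr] F_\Phi^P(\alpha) x = 0$, which in turn forces $F_\Phi^P(\alpha)x = 0$, giving the reverse containment of right null spaces at each finite eigenvalue. Since both $\mathcal{L}(\lambda)$ and $F_\Phi^P(\lambda)$ are (strong) linearizations of the same regular $P(\lambda)$, they share their finite spectra, so no eigenvector is inadvertently lost in this transfer.

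The main subtlety I foresee is handling the eigenvalue at infinity uniformly when $P_k$ is singular. This is resolved by noting that the factorization descends to the leading coefficient matrices as $X = \bigl[\, v\otimes I_n \;~ B \,\bigr]\, X_F$, where $X$ and $X_F$ denote the leading coefficients of $\mathcal{L}(\lambda)$ and $F_\Phi^P(\lambda)$ respectively. Consequently, $\ker X_F \subseteq \ker X$ (yielding part~1 at $\infty$), and when $\bigl[\, v\otimes I_n \;~ B \,\bigr]$ is invertible the two kernels coincide (yielding part~2 at $\infty$). Apart from this routine bookkeeping for the infinite eigenvalue, the whole proof collapses to a one-line consequence of Theorem~\ref{thm_master1} combined with Corollary~\ref{cor_lincondition1}.
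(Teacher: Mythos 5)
Your proposal is correct and follows essentially the same route as the paper: both parts rest on the factorization $\mathcal{L}(\lambda)=\bigl[\, v\otimes I_n \;~ B\,\bigr]F_{\Phi}^P(\lambda)$ from Theorem~\ref{thm_master1}, with part~1 being the trivial nullspace inclusion $\mathcal{N}_r(F_{\Phi}^P(\alpha))\subseteq\mathcal{N}_r(\mathcal{L}(\alpha))$. The only (harmless) difference is in part~2, where you make the argument fully explicit by invoking the invertibility of $\bigl[\, v\otimes I_n \;~ B\,\bigr]$ via Corollary~\ref{cor_lincondition1}.2 and by treating $\alpha=\infty$ separately through the leading coefficients, whereas the paper more tersely asserts that the inclusion of part~1 must upgrade to the equality $\mathcal{N}_r(F_{\Phi}^P(\alpha))=\mathcal{N}_r(\mathcal{L}(\alpha))$ once $\mathcal{L}(\lambda)$ is a linearization.
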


\begin{proof}
\begin{enumerate}
\item  This is clear since $\mathcal{N}_r( F_{\Phi}^P(\alpha)) \subseteq \mathcal{N}_r( \mathcal{L}(\alpha))$
always holds, see (\ref{thm_charM1}).
\item   Whenever $\mathcal{L}(\lambda)$ is a (strong) linearization for $P(\lambda)$, we obtain from 1. that
$\mathcal{N}_r( F_{\Phi}^P(\alpha)) = \mathcal{N}_r( \mathcal{L}(\alpha))$
has to hold. Thus every right eigenvector of $\mathcal{L}(\lambda)$ is a right eigenvector of
$F_{\Phi}^P(\lambda)$.
\end{enumerate}\vspace*{-0.8cm}
\end{proof}

From Proposition \ref{prop_eigrec1} we directly obtain the following linearization condition for matrix pencils in $\mathbb{M}_1(P)$:
\begin{proposition} \label{prop_eigrec2}
Let $P(\lambda)$ be an $n \times n$ regular matrix polynomial of degree $k \geq 2$ and $\mathcal{L}(\lambda) \in
\mathbb{M}_1(P)$. Then $\mathcal{L}(\lambda)$ is a
strong linearization for $P(\lambda)$ if and only if every right eigenvector of $\mathcal{L}(\lambda)$ is a right
eigenvector of $F_{\Phi}^P(\lambda)$ for some (finite or infinite) eigenvalue.
\end{proposition}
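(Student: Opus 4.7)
The plan is to prove the two implications separately. The forward direction ($\Rightarrow$) is immediate from Proposition~\ref{prop_eigrec1} part 2: if $\mathcal{L}(\lambda)$ is a strong linearization, then $\mathcal{N}_r(\mathcal{L}(\alpha)) = \mathcal{N}_r(F_\Phi^P(\alpha))$ for every $\alpha$, so every right eigenvector of $\mathcal{L}$ is a right eigenvector of $F_\Phi^P$ at the same eigenvalue, hence at some eigenvalue.

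For the converse I would argue the contrapositive. Suppose $\mathcal{L}(\lambda)\in\mathbb{M}_1(P)$ is not a strong linearization. By Theorem~\ref{thm_stronglin} the matrix $C:=[\, v\otimes I_n \;~ B\,]$ has $\textnormal{rank}(C)<kn$, so $\mathcal{L}(\lambda)=CF_\Phi^P(\lambda)$ is a singular pencil and $\mathcal{N}_r(C)\neq 0$. Picking any nonzero $z\in\mathcal{N}_r(C)$ and any $\alpha\in\mathbb{C}$ that is not an eigenvalue of $P$ (such $\alpha$ exist since $P$ is regular), $F_\Phi^P(\alpha)$ is invertible and $y_\alpha:=F_\Phi^P(\alpha)^{-1}z$ is a nonzero right eigenvector of $\mathcal{L}$ at $\alpha$ because $\mathcal{L}(\alpha)y_\alpha=Cz=0$. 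The goal is to exhibit, at this or another $\alpha$, a right eigenvector of $\mathcal{L}$ lying outside the finite union of subspaces $V:=\bigcup_\mu \Phi_k(\mu)\otimes\mathcal{N}_r(P(\mu)) \,\cup\, \mathcal{N}_r(X)$ that collects every right eigenvector of $F_\Phi^P$, where $X$ is the leading coefficient of $F_\Phi^P(\lambda)$.

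The key computation is $F_\Phi^P(\alpha)(\Phi_k(\mu)\otimes x) = (\alpha-\mu)\,X(\Phi_k(\mu)\otimes x)$ for $x\in\mathcal{N}_r(P(\mu))$, following from $F_\Phi^P(\mu)(\Phi_k(\mu)\otimes x)=e_1\otimes P(\mu)x=0$ together with $F_\Phi^P(\lambda)=X\lambda+Y$. Requiring $y_\alpha\in\Phi_k(\mu)\otimes\mathcal{N}_r(P(\mu))$ therefore forces the stringent constraint $z = X(\Phi_k(\mu)\otimes x_\mu)$ for some $x_\mu\in\mathcal{N}_r(P(\mu))$; requiring $y_\alpha\in\mathcal{N}_r(X)$ imposes an analogous rigid constraint on $z$ involving $P_k$ and $P_{k-1}$. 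In the generic case (for instance when $\dim\mathcal{N}_r(C)\geq 2$, or when $z$ may be chosen outside the low-dimensional union $\bigcup_\mu X(\Phi_k(\mu)\otimes\mathcal{N}_r(P(\mu)))$) the vector $y_\alpha$ lies outside $V$ and we are done.

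The hard part will be the degenerate case where $\dim\mathcal{N}_r(C)=1$ and its generator does satisfy $z=X(\Phi_k(\mu)\otimes x_\mu)$ for some eigenvalue $\mu$ of $P$, for then every $y_\alpha$ with $\alpha\neq\mu$ not an eigenvalue coincides with the $F_\Phi^P$-eigenvector $\Phi_k(\mu)\otimes x_\mu/(\alpha-\mu)$. To resolve this I would evaluate $\mathcal{N}_r(\mathcal{L}(\mu'))$ at a second eigenvalue $\mu'\neq\mu$ of $P$: this null space contains both the genuine $F_\Phi^P$-eigenvectors $\Phi_k(\mu')\otimes\mathcal{N}_r(P(\mu'))$ at $\mu'$ and the preimage $\Phi_k(\mu)\otimes x_\mu/(\mu'-\mu)$ of $z$ under $F_\Phi^P(\mu')$, which is the $F_\Phi^P$-eigenvector at the different eigenvalue $\mu$. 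A nontrivial linear combination of $F_\Phi^P$-eigenvectors at two distinct eigenvalues cannot itself be an $F_\Phi^P$-eigenvector (matching block by block and using the linearity of $\phi_1$ forces the two eigenvalues to coincide), yielding the required nonmatching right eigenvector of $\mathcal{L}$. If $P$ has only the single eigenvalue $\mu$, necessarily defective for $k\geq 2$, a parallel argument at $\alpha=\mu$ locates a vector in $\mathcal{N}_r(\mathcal{L}(\mu))\setminus\mathcal{N}_r(F_\Phi^P(\mu))$ that, being a generalized rather than genuine eigenvector of $F_\Phi^P$, lies outside $V$ and completes the proof.
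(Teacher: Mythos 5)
Your forward direction and the first paragraph of your converse are precisely the paper's proof, which stops there: having produced $y_\alpha = F_{\Phi}^P(\alpha)^{-1}z$ with $\mathcal{L}(\alpha)y_\alpha = 0$ and $F_{\Phi}^P(\alpha)y_\alpha = z \neq 0$ at a point $\alpha$ that is not an eigenvalue of $P$, the paper declares $y_\alpha$ to be a right eigenvector of $\mathcal{L}(\alpha)$ that is not an eigenvector of $F_{\Phi}^P(\alpha)$ and is done. In other words, the paper reads the condition as in Proposition \ref{prop_eigrec1}: each right eigenvector of $\mathcal{L}(\lambda)$ at $\alpha$ must be a right eigenvector of $F_{\Phi}^P(\lambda)$ at that \emph{same} $\alpha$. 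Everything from your third paragraph onward is devoted to the stronger literal reading (``at some, possibly different, eigenvalue''), which the paper's proof does not address; so relative to the paper you have established the intended statement and then gone considerably further.

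If one insists on the stronger reading, your plan is the right idea but two steps would not survive as written. First, the ``generic'' escape via $\dim\mathcal{N}_r(C)\geq 2$ (with $C=[\, v \otimes I_n \;~ B \,]$) fails: the whole of $\mathcal{N}_r(C)$ can lie inside $X\big(\Phi_k(\mu)\otimes\mathcal{N}_r(P(\mu))\big)$ for a single eigenvalue $\mu$ of geometric multiplicity at least two, so the dichotomy has to be whether $\mathcal{N}_r(C)$ is contained in $\bigcup_\mu X\big(\Phi_k(\mu)\otimes\mathcal{N}_r(P(\mu))\big)$, not a dimension count. Second, the claim that a nontrivial combination of $F_{\Phi}^P$-eigenvectors at distinct eigenvalues is never an eigenvector is true, but your blockwise/$\phi_1$-linearity justification breaks down when the two $n$-vector parts are parallel: for $k=2$ the blocks can then be matched by a third point $\nu$, and one must separately invoke regularity of $P$ (a degree-$k$ vector polynomial $P(\lambda)x$ cannot vanish at $k+1$ points) to rule out $P(\nu)x=0$. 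The clean tool --- which also disposes of the defective single-eigenvalue endgame you only gesture at --- is the Weierstrass decomposition of $\mathbb{C}^{kn}$ into root subspaces of the regular pencil $F_{\Phi}^P(\lambda)$: a vector with nonzero components in two distinct root subspaces, or a proper generalized eigenvector, cannot be an eigenvector for any eigenvalue.
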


\begin{proof}
$ \Rightarrow$ Suppose $\alpha$ is an eigenvalue of $\mathcal{L}(\lambda)$ and $u
\in
\mathbb{C}^{kn}$ is the corresponding eigenvector, i.e. $\mathcal{L}(\alpha)u=0$. Then, since
$$\mathcal{L}(\alpha) = \big[ \,  v \otimes I_n \;~ B \, ]F_{\Phi}^P(\alpha)$$
assuming that $F_{\Phi}^P(\alpha)u \neq 0$ we obtain that $F_{\Phi}^P(\alpha)u \in \mathcal{N}_r( [ \, v \otimes
I_n \;~ B \, ])$. Thus, $[ \,  v \otimes I_n \;~ B \, ]$ is singular and $\mathcal{L}(\lambda)$ is no
linearization for $P(\lambda)$, a contradiction.
$\Leftarrow$ On the other hand, assume that $[ \,  v \otimes I_n \;~ B \, ]$ is singular, i.e.
$\mathcal{L}(\lambda)$ is not a linearization for $P(\lambda)$. Then there exists some $w \in \mathbb{C}^{kn}$
such that $[ \,  v \otimes I_n \;~ B \, ]w=0$. Now take any $\beta \in \mathbb{C}$ that is not an eigenvalue of
$P(\lambda)$, then $F_{\Phi}^P(\beta)$ is nonsingular. Therefore we may solve $F_{\Phi}^P(\beta)z = w$ for $z$ and
thus $z$ is a right eigenvector of $\mathcal{L}(\beta)$ that is not an eigenvector of $F_{\Phi}^P(\beta)$.
\end{proof}

Note that Proposition \ref{prop_eigrec1} states a linearization condition for any pencil $\mathcal{L}(\lambda)$ in
$\mathbb{M}_1(P)$ in terms of (a comparison of) the right eigenvectors of $\mathcal{L}(\lambda)$ and
$F_{\Phi}^P(\lambda)$.\footnote{Another linearization condition based upon left eigenvectors is derived in
Section \ref{sec:eigenvectorexclusion}.} Certainly, a similar statement holds for pencils in $\mathbb{M}_2(P)$.

The following proposition shows how eigenvectors of regular $P(\lambda)$ can be recovered from eigenvectors of
linearizations in $\mathbb{M}_1(P).$ This has already been observed in a slightly different form in \cite[Sec. 7]{TerDM09}.
It can be proven exactly analogous to \cite[Thm. 3.8, Thm. 3.14, Thm. 4.4]{MacMMM06}. Taking Proposition \ref{prop_eigrec1} and Proposition \ref{prop_eigrec2} into account, Proposition 3 allows the complete
eigenvector recovery for linearizations in $\mathbb{M}_1(P)$ and $\mathbb{M}_2(P)$.

\begin{proposition} \label{prop_eigenvectorrecovery}
 Let $P(\lambda)$ be an $n \times n$ regular matrix polynomial of degree $k \geq 2$. Then the following statements hold:
\begin{enumerate}
\item Let $\alpha$ be some finite eigenvalue of $P(\lambda)$. Then $u \in \mathcal{N}_r(P(\alpha))$ if and only if
$\Phi_k(\alpha) \otimes u \in \mathcal{N}_r(F_{\Phi}^P(\alpha))$. Moreover, every right eigenvector $w$ of
$F_{\Phi}^P(\alpha)$ has the form
$w=\Phi_k(\alpha) \otimes u$ for some $u \in \mathcal{N}_r(P(\alpha))$.
\item Let $\alpha$ be infinity. Then $u \in \mathcal{N}_r( \textnormal{rev}_k \, P(0))$ if and only if
$e_1 \otimes u \in \mathcal{N}_r(\textnormal{rev}_1 \, F_{\Phi}^P(0))$. Moreover, every right eigenvector $w$ of
$\textnormal{rev}_1 \, F_{\Phi}^P(0)$ has
the form
$w=e_1 \otimes u$ for some $u \in \mathcal{N}_r(\textnormal{rev}_k \, P(0))$.
\end{enumerate}
\end{proposition}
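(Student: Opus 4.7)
The plan is to separate the finite and infinite eigenvalue cases and in each case exploit the defining identity $F_\Phi^P(\lambda)(\Phi_k(\lambda)\otimes I_n)=e_1\otimes P(\lambda)$ together with the structural information we already have about the two blocks $m_\Phi^P(\lambda)$ and $M_\Phi(\lambda)$ of $F_\Phi^P(\lambda)$.

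For part 1, one direction is immediate: if $P(\alpha)u=0$, then evaluating the defining identity at $\lambda=\alpha$ and multiplying on the right by $u$ gives $F_\Phi^P(\alpha)(\Phi_k(\alpha)\otimes u)=e_1\otimes P(\alpha)u=0$. The converse and the \emph{every eigenvector has this form} claim are what I would prove together by analyzing the null space of $F_\Phi^P(\alpha)$ directly. The key observation is that the bottom block $M_\Phi(\alpha)=M_\Phi^\star(\alpha)\otimes I_n$ annihilates the candidate vector; conversely, since the leading $(k-1)\times(k-1)$ submatrix of $M_\Phi^\star(\alpha)$ is upper triangular with the nonzero entries $-\alpha_{k-2},\ldots,-\alpha_0$ on its diagonal, $M_\Phi^\star(\alpha)$ has rank $k-1$ for every $\alpha\in\mathbb{C}$ and $\mathcal{N}_r(M_\Phi^\star(\alpha))=\mathrm{span}\{\Phi_k(\alpha)\}$ (this uses $M_\Phi^\star(\alpha)\Phi_k(\alpha)=0$, which follows from the three-term recurrence). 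Consequently $\mathcal{N}_r(M_\Phi(\alpha))=\Phi_k(\alpha)\otimes\mathbb{C}^n$. So any $w\in\mathcal{N}_r(F_\Phi^P(\alpha))$ must have the form $w=\Phi_k(\alpha)\otimes u$, and then the top block equation $m_\Phi^P(\alpha)w=0$ becomes $P(\alpha)u=0$ by the identity $m_\Phi^P(\lambda)(\Phi_k(\lambda)\otimes I_n)=P(\lambda)$.

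For part 2, I would first compute $\mathrm{rev}_1 F_\Phi^P(0)$ explicitly: writing $F_\Phi^P(\lambda)=X\lambda+Y$, one has $\mathrm{rev}_1 F_\Phi^P(0)=X$. From the block structure of $m_\Phi^P(\lambda)$ and $M_\Phi(\lambda)$, the coefficient of $\lambda$ is
\[
X=\begin{bmatrix}\alpha_{k-1}^{-1}P_k & 0\\ 0 & I_{(k-1)n}\end{bmatrix}.
\]
It is then immediate that $w\in\mathcal{N}_r(X)$ if and only if $w=e_1\otimes u$ with $u\in\mathcal{N}_r(P_k)$. To close the loop, I would relate $\mathrm{rev}_k P(0)$ to $P_k$: if $c_k\neq 0$ denotes the leading coefficient of $\phi_k$, then $\lambda^k\phi_i(1/\lambda)$ vanishes at $\lambda=0$ for $i<k$ and equals $c_k$ for $i=k$, so $\mathrm{rev}_k P(0)=c_kP_k$ and hence $\mathcal{N}_r(\mathrm{rev}_k P(0))=\mathcal{N}_r(P_k)$. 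This delivers both the equivalence and the \enquote{every eigenvector has this form} statement simultaneously.

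The only part that is not entirely routine is the infinite eigenvalue computation, precisely because $\mathrm{rev}_k P(0)$ in a nonstandard basis is not simply $P_k$, so a short computation of leading coefficients in the basis $\{\phi_i\}$ is needed. Once that proportionality $\mathrm{rev}_k P(0)=c_k P_k$ is in hand, everything reduces to linear algebra on the explicit block matrix $X$. Thus I expect the main obstacle to be bookkeeping rather than conceptual, and the whole proof should be quite short.
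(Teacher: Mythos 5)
Your proof is correct. Note, however, that the paper does not actually prove this proposition: it only remarks that the statement ``can be proven exactly analogous to [MacMMM06, Thm.\ 3.8, Thm.\ 3.14, Thm.\ 4.4]'' and points to [TerDM09, Sec.\ 7], so there is no in-paper argument to compare against line by line. Your self-contained argument is the natural one for the anchor pencil $F_{\Phi}^P(\lambda)$ and is very much in the spirit of the paper's other proofs: the observation that the leading $(k-1)\times(k-1)$ block of $M_\Phi^\star(\alpha)$ is triangular with nonzero diagonal entries $-\alpha_{k-2},\dots,-\alpha_0$, forcing $\mathcal{N}_r(M_\Phi(\alpha))=\Phi_k(\alpha)\otimes\mathbb{C}^n$, is precisely the right-nullspace mirror of the block-Hessenberg ``peeling'' argument the paper uses on left nullvectors in the proof of Theorem \ref{thm_eetsingular}. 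The cited proofs in [MacMMM06] instead proceed via a right-sided factorization of the pencil and apply to arbitrary linearizations in $\mathbb{L}_1(P)$, whereas your computation is specific to $F_\Phi^P$ --- which is all the proposition claims, and which combines with Propositions \ref{prop_eigrec1}--\ref{prop_eigrec2} exactly as the paper intends. Two minor points worth making explicit: the nonvanishing of the leading coefficient $c_k$ of $\phi_k$ follows from $\alpha_j\neq 0$ in the recurrence (indeed $c_k=\prod_{j=0}^{k-1}\alpha_j^{-1}$), and the identification $\mathcal{N}_r(M_\Phi(\alpha))=\Phi_k(\alpha)\otimes\mathbb{C}^n$ deserves the one-line dimension count ($\operatorname{rank}(A\otimes I_n)=n\operatorname{rank}(A)$) to rule out additional nullvectors. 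Neither is a gap.
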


It is well-known that for singular matrix polynomials recovering the complete eigenstructure
comprises not only of the finite and infinite eigenvalues
but also the left and right minimal indices and minimal bases. Without further ado we would like to
point the reader to \cite[Sec. 7]{TerDM09}.

\section{A Note on Singular Matrix Polynomials}
\label{sec:singular}

As already discussed in Section \ref{sec3}, when $P(\lambda)$ is regular, any linearization in $\mathbb{L}_1(P)$
(or $\mathbb{M}_1(P)$) is necessarily a strong linearization.
In \cite[Ex. 3]{TerDM09} is was shown that the equivalence of strong linearizations and linearizations does not
hold for singular $P(\lambda).$
 Moreover, \cite[Ex. 2]{TerDM09} shows that the condition (\ref{cor_lincondition}) turns out to be
neither necessary for linearizations nor for strong linearizations. In this section we consider singular
matrix polynomials $P(\lambda)$ and give a sufficient condition on when the equivalence of being a linearization, a strong linearization and having full $Z$-rank holds. The main result of this section is the following theorem which extends \cite[Lem. 5.5]{TerDM09} by complementing it to an equivalence statement. Moreover, it is extended to orthogonal bases.

\begin{theorem} \label{thm_eetsingular}
 Let $P(\lambda)$ be an $n \times n$ singular matrix polynomial of degree $k \geq 2$ and assume
$\mathcal{L}(\lambda) \in
\mathbb{M}_1(P)$ as in (\ref{thm_charM1}). Then $\textnormal{rank}([ \, v \otimes I_n \;~ B \, ])=kn$
if and only if $$ u(\lambda)^T(v \otimes I_n) \neq 0$$
for every $u(\lambda) \in \mathcal{N}_\ell(\mathcal{L}(\lambda)).$

\end{theorem}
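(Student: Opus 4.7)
The plan is to reduce everything, via Theorem \ref{thm_master1}, to the factorization
\[
\mathcal{L}(\lambda) = [\, v\otimes I_n\;~B\,]\,F_{\Phi}^P(\lambda)
\]
and exploit two structural facts: first, $[v\otimes I_n\;~B]$ is a \emph{square} $kn\times kn$ real matrix, so that ``rank $kn$'' just means invertible; second, the bottom block $M_\Phi(\lambda) = M_\Phi^\star(\lambda)\otimes I_n$ of $F_\Phi^P(\lambda)$ has trivial left nullspace over $\mathbb{R}(\lambda)^{(k-1)n}$. This latter fact follows from the observation that the first $k-1$ columns of $M_\Phi^\star(\lambda)$ form a lower-triangular matrix with nonzero diagonal entries $-\alpha_{k-2},\dots,-\alpha_0$, so $M_\Phi^\star(\lambda)$ has full row rank $k-1$, and hence $M_\Phi(\lambda)$ has rank $(k-1)n$ over $\mathbb{R}(\lambda)$.

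For the $(\Rightarrow)$ direction, assume $[\,v\otimes I_n\;~B\,]$ has rank $kn$ (hence is invertible) and take any $u(\lambda)\in\mathcal{N}_\ell(\mathcal{L}(\lambda))$. Setting $w(\lambda)^T := u(\lambda)^T[\,v\otimes I_n\;~B\,]$ and splitting $w$ into its first $n$ components $w_1(\lambda)^T = u(\lambda)^T(v\otimes I_n)$ and its last $(k-1)n$ components $w_2(\lambda)^T = u(\lambda)^T B$, the relation $w(\lambda)^T F_\Phi^P(\lambda)=0$ becomes
\[
w_1(\lambda)^T m_\Phi^P(\lambda) + w_2(\lambda)^T M_\Phi(\lambda) = 0.
\]
Supposing $w_1(\lambda)^T = u(\lambda)^T(v\otimes I_n) = 0$, one concludes $w_2(\lambda)^T M_\Phi(\lambda)=0$; the structural fact above forces $w_2(\lambda)=0$. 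Hence $u(\lambda)^T[\,v\otimes I_n\;~B\,]=0$, and invertibility yields $u(\lambda)=0$. Thus for every nonzero $u(\lambda)\in\mathcal{N}_\ell(\mathcal{L}(\lambda))$ one must have $u(\lambda)^T(v\otimes I_n)\neq 0$.

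For the $(\Leftarrow)$ direction I argue by contrapositive. If $\mathrm{rank}([\,v\otimes I_n\;~B\,])<kn$, then since the matrix is square there exists a nonzero constant vector $r\in\mathbb{R}^{kn}$ with $r^T[\,v\otimes I_n\;~B\,]=0$. Taking $u(\lambda)\equiv r$, one immediately has $u(\lambda)^T\mathcal{L}(\lambda)=r^T[\,v\otimes I_n\;~B\,]F_\Phi^P(\lambda)=0$ and also $u(\lambda)^T(v\otimes I_n)=0$, producing a nonzero element of $\mathcal{N}_\ell(\mathcal{L}(\lambda))$ annihilating $v\otimes I_n$.

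The main (and only genuinely non-trivial) obstacle is to justify why $M_\Phi(\lambda)$ has trivial left nullspace. Everything else is bookkeeping around the factorization of Theorem \ref{thm_master1}. Once the row-rank of $M_\Phi^\star(\lambda)$ is settled from the bi-diagonal-with-nonzero-subdiagonal pattern (and the Kronecker multiplicativity of rank), the proof is essentially a one-line manipulation in each direction.
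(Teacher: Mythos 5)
Your proof is correct and follows essentially the same route as the paper: the paper's forward direction peels off the blocks of $w(\lambda)$ one at a time using the block-Hessenberg structure of $F_{\Phi}^P(\lambda)$ and the nonvanishing of the $\alpha_j$, which is precisely your lemma that $M_\Phi(\lambda)$ has full row rank and hence trivial left nullspace, and the backward direction is identical. (Minor slip: the first $k-1$ columns of $M_\Phi^\star(\lambda)$ form an \emph{upper}-triangular, not lower-triangular, matrix with diagonal $-\alpha_{k-2},\dots,-\alpha_0$; the conclusion is unaffected.)
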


\begin{proof}
Assume that $\mathcal{L}(\lambda)$ as given in (\ref{thm_charM1}) satisfies $\textnormal{rank}([ \, v \otimes I_n \;~ B \, ])=kn$ and let $ 0 \neq u(\lambda) \in \mathcal{N}_\ell(\mathcal{L}(\lambda))$.
Defining $w(\lambda) \in \mathbb{R}(\lambda)^{kn}$ as
 $$w(\lambda)^T = \big[ \, w_1(\lambda) \; w_2(\lambda) \; \cdots \; w_{kn}(\lambda) \, \big] := u(\lambda)^T\big[ \, v \otimes I_n \;~ B \, \big]$$
 and assuming that $u(\lambda)^T( v \otimes I_n) = 0$, we obviously obtain $$[ \, w_1(\lambda) \; \cdots \; w_n(\lambda) \, ] = 0.$$ Moreover,
since
$u(\lambda) \in \mathcal{N}_\ell(\mathcal{L}(\lambda))$, we have
 \begin{equation} u(\lambda)^T \mathcal{L}(\lambda) = u(\lambda)^T [ \, v \otimes I_n \;~ B \, ]F_{\Phi}^P(\lambda) = w(\lambda)^T
F_{\Phi}^P(\lambda)=0. \label{exclusion1} \end{equation}
Using the fact that $[ \, w_1(\lambda) \; \cdots \; w_n(\lambda) \, ] = 0$, (\ref{exclusion1}) and the block-Hessenberg structure of
$F_{\Phi}^P(\lambda)$ imply $\alpha_{k-2}[
w_{n+1}(\lambda) \; \cdots \; w_{2n}(\lambda) \, ] = [ \, 0 \; \cdots \; 0 \, ]$, thus $w_{n+1}(\lambda) = \cdots = w_{2n}(\lambda) =0$. Therefore we actually have
\begin{equation}
\big[ \, 0 \; 0 \;
\ldots \; 0 \; 0 \; w_{2n+1}(\lambda) \; \cdots \; w_{kn}(\lambda) \, \big] F_{\Phi}^P(\lambda) = \big[ \, 0 \; \cdots \; 0 \, \big].
\label{exclusion2} \end{equation}
From (\ref{exclusion2}) the same observation yields $\alpha_{k-3}[ w_{2n+1}(\lambda) \; \cdots \; w_{3n}(\lambda) \, ] = [ \, 0
\; \cdots \; 0 \,
]$ implying $w_{2n+1}(\lambda) = \cdots = w_{3n}(\lambda) =0$. Continuing this procedure up to $\alpha_0$ we obtain $w(\lambda) \equiv 0$. In other words,
$u(\lambda) \in \mathcal{N}_\ell([ \, v \otimes I_n \;~ B ])$. This implies $[ \, v \otimes I_n
\;~ B \, ]$ to be singular. Since we have assumed $[ \, v \otimes I_n \;~ B \, ]$ to have full rank, $u(\lambda)^T(v \otimes I_n) = 0$ implies $u(\lambda) \equiv 0$, a contradiction. Thus, the assumption $u(\lambda)^T (v \otimes I_n) = 0$ must have been false and we have $u(\lambda)(v \otimes I_n) \neq 0$ for every $0 \neq u(\lambda) \in \mathcal{N}_\ell(\mathcal{L}(\lambda))$.

Now suppose $u(\lambda)^T(v \otimes I_n) \neq 0$ holds for every $0 \neq u(\lambda) \in \mathcal{N}_\ell(\mathcal{L}(\lambda))$. Assuming $\textnormal{rank}([ \, v \otimes I_n \;~ B \, ]) < kn$ implies the existence of at least one vector $0 \neq q \in \mathbb{R}^{kn}$ with $q^T[ \,  v \otimes I_n \;~ B \, ] = 0$. Since $\mathcal{L}(\lambda) = [ \, v \otimes I_n \;~ B \, ]F_{\Phi}^P(\lambda)$ we have $q^T \mathcal{L}(\lambda) = 0$, so obviously $q \in \mathcal{N}_\ell( \mathcal{L}(\lambda))$. Now in particular $q$ satisfies $$q^T(v \otimes I_n) = 0$$ which is a contradiction for we assumed $u(\lambda)^T(v \otimes I_n) \neq 0$ for every $0 \neq u(\lambda) \in \mathcal{N}_\ell(\mathcal{L}(\lambda))$. Thus we must have $\textnormal{rank}([ \, v \otimes I_n \;~ B \, ]) = kn.$
\end{proof}

We obtain an immediate corollary:

\begin{corollary} \label{singular_lin}
Let $P(\lambda)$ be an $n \times n$ singular matrix polynomial of degree $k \geq 2$ and assume $\mathcal{L}(\lambda) \in \mathbb{M}_1(P)$. If
$$u(\lambda)^T(v \otimes I_n) \neq 0$$
for all $u(\lambda) \in \mathcal{N}_\ell(\mathcal{L}(\lambda))$ then $\mathcal{L}(\lambda)$ is a strong linearization for $P(\lambda)$.
\end{corollary}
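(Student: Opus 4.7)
The plan is to obtain this corollary as a one-step consequence of the two immediately preceding results. First I would use Theorem \ref{thm_master1} to write $\mathcal{L}(\lambda) = [\, v \otimes I_n \;~ B \,] F_\Phi^P(\lambda)$ for some $B \in \mathbb{R}^{kn \times (k-1)n}$, so that the quantities appearing in the hypothesis of the corollary and in the rank condition of Theorem \ref{thm_eetsingular} are set up identically. The hypothesis $u(\lambda)^T(v \otimes I_n) \neq 0$ for every (necessarily nonzero) $u(\lambda) \in \mathcal{N}_\ell(\mathcal{L}(\lambda))$ is precisely the right-hand side of the biconditional in Theorem \ref{thm_eetsingular}, so applying that theorem yields $\textnormal{rank}([\, v \otimes I_n \;~ B \,]) = kn$.

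Next I would appeal to the first part of Corollary \ref{cor_lincondition1}, which was deliberately phrased to apply to regular and singular $P(\lambda)$ alike: whenever $[\, v \otimes I_n \;~ B \,]$ has full rank it lies in $\textnormal{GL}_{kn}(\mathbb{R})$, and therefore left-multiplying $F_\Phi^P(\lambda)$ by it produces a strongly equivalent pencil. Since $F_\Phi^P(\lambda)$ is itself a strong linearization of $P(\lambda)$ (as recorded in Section \ref{sec3}, citing \cite{AmiCL09,TerDM09}), strong equivalence transfers this property to $\mathcal{L}(\lambda)$, giving the conclusion.

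The only obstacle worth noting is purely conceptual rather than technical: one must verify that neither Theorem \ref{thm_eetsingular} nor Corollary \ref{cor_lincondition1}.1 tacitly uses regularity of $P(\lambda)$. Both were explicitly formulated to cover the singular case, so the chaining is clean, and the corollary follows with essentially no computation beyond invoking the two prior results in sequence.
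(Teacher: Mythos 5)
Your argument is exactly the paper's proof: apply Theorem \ref{thm_eetsingular} to conclude $\textnormal{rank}([\, v \otimes I_n \;~ B \,]) = kn$ from the hypothesis on left nullspace vectors, then invoke Corollary \ref{cor_lincondition1}.1 (which holds for singular $P(\lambda)$) to get that $\mathcal{L}(\lambda)$ is a strong linearization. The reasoning is correct and matches the paper step for step.
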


\begin{proof}
This follows immediately from Theorem \ref{thm_eetsingular} since $u(\lambda)^T(v \otimes I_n) \neq 0$
for all $u(\lambda) \in \mathcal{N}_\ell(\mathcal{L}(\lambda))$ implies $\textnormal{rank}([ \, v \otimes I_n \;~ B \, ])=kn.$
This in turn implies $\mathcal{L}(\lambda)$ to be a strong linearization for $P(\lambda)$ according to Corollary \ref{cor_lincondition1}.
\end{proof}

We now state a modified version of the \textit{Strong Linearization Theorem} adapted for singular matrix polynomials. The original theorem applies to regular matrix polynomials and was proven for $\mathbb{L}_1(P)$ in \cite[Thm. 4.3]{MacMMM06} and extended to degree-graded bases in \cite[Thm. 2.1]{NakNT12}.

\begin{theorem}[Strong Linearization Theorem] \label{thm_stronglinsing}
Let $P(\lambda)$ be an $n \times n$ singular matrix polynomial of degree $k \geq 2$ and $\mathcal{L}(\lambda) \in \mathbb{M}_1(P)$ as given in (\ref{thm_charM1}). Additionally assume that \begin{equation} u(\lambda)^T(v \otimes I_n) \neq 0 \label{stronglinthm_sing} \end{equation} for all $0 \neq u(\lambda) \in \mathcal{N}_\ell(\mathcal{L}(\lambda))$. Then the following statements are equivalent:
\begin{enumerate}
\item $\textnormal{rank}([ \, v \otimes I_n \;~ B \, ]=kn$ (i.e. $\mathcal{L}(\lambda)$ has full $Z$-rank, see Corollary \ref{cor_fullZrank}).
\item $\mathcal{L}(\lambda)$ is a strong linearization for $P(\lambda)$.
\item $\mathcal{L}(\lambda)$ is a linearization for $P(\lambda)$.
\end{enumerate}
\end{theorem}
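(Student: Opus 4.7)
My plan is to present this theorem essentially as a repackaging of Theorem \ref{thm_eetsingular} together with Corollary \ref{cor_lincondition1}. The pivotal observation is that Theorem \ref{thm_eetsingular} already establishes, for any $\mathcal{L}(\lambda) \in \mathbb{M}_1(P)$, the equivalence between the hypothesis (\ref{stronglinthm_sing}) and the full $Z$-rank condition (1). Consequently, once (\ref{stronglinthm_sing}) is imposed as a standing assumption, statement (1) is automatic, and the entire equivalence cycle collapses into two short and routine implications.

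First I would verify $1 \Rightarrow 2$ by invoking Corollary \ref{cor_lincondition1}.1 directly: if $[v \otimes I_n \;~ B]$ has full rank $kn$, then $\mathcal{L}(\lambda) = [v \otimes I_n \;~ B]\, F_\Phi^P(\lambda)$ is strongly equivalent to $F_\Phi^P(\lambda)$, which is a strong linearization for $P(\lambda)$ regardless of regularity (as recorded in the discussion preceding Theorem \ref{thm_master1}, citing \cite{AmiCL09} and \cite[Sec. 7]{TerDM09}). Next, $2 \Rightarrow 3$ is immediate from the definition of strong linearization.

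To close the cycle I would observe that $3 \Rightarrow 1$ holds trivially under (\ref{stronglinthm_sing}): Theorem \ref{thm_eetsingular} supplies condition (1) directly from (\ref{stronglinthm_sing}), without using the fact that $\mathcal{L}(\lambda)$ linearizes $P(\lambda)$ at all. Thus any chain such as $3 \Rightarrow 1$ (or $3 \Rightarrow 2$) is automatic once (\ref{stronglinthm_sing}) is in force, because (1) always holds under that assumption.

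The main obstacle is essentially nonexistent, since the substantive work, that is, the equivalence between the rank condition and the left-nullspace condition for singular pencils, has already been carried out in Theorem \ref{thm_eetsingular}. The contribution of the present theorem is purely organizational: it exhibits the failure of $1 \Leftrightarrow 2 \Leftrightarrow 3$ in the general singular setting (as illustrated by \cite[Ex. 2, Ex. 3]{TerDM09}) as being ruled out precisely by the hypothesis (\ref{stronglinthm_sing}), thereby extending the regular Strong Linearization Theorem \ref{thm_stronglin} to the singular case on this restricted domain.
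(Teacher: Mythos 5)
Your proposal is correct and follows essentially the same route as the paper: $1\Rightarrow 2$ via Corollary \ref{cor_lincondition1}, $2\Rightarrow 3$ trivially, and the closing of the cycle via Theorem \ref{thm_eetsingular} together with the standing hypothesis (\ref{stronglinthm_sing}). Your additional remark that (1) is in fact automatic under (\ref{stronglinthm_sing}) is a valid sharpening but does not change the argument.
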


\begin{proof}
It is clear that $1. \Rightarrow 2. \Rightarrow 3.$ holds even without the assumption $u(\lambda)^T(v \otimes I_n) \neq 0$ for all $0 \neq u(\lambda) \in \mathcal{N}_\ell(\mathcal{L}(\lambda))$ and that $3. \Rightarrow 1.$ follows from Theorem \ref{thm_eetsingular} taking (\ref{stronglinthm_sing}) into account.
\end{proof}

\section{Double Generalized Ansatz Spaces and Block-Symmetry}\label{sec4}

In this section, we characterize matrix pencils that are contained in both generalized ansatz spaces
$\mathbb{M}_1(P)$ and $\mathbb{M}_2(P)$ for an $n \times n$ matrix polynomial of degree $k \geq 2$.

Certainly, if some matrix pencil $\mathcal{L}(\lambda)$ satisfies (\ref{ansatzequation}),
$\mathcal{L}(\lambda)^{\mathcal{B}}$ satisfies
(\ref{ansatzequation2}) and vice versa. Consequently, if $\mathcal{L}(\lambda) = \mathcal{L}(\lambda)^{\mathcal{B}}$,
$\mathcal{L}(\lambda) \in \mathbb{M}_1(P) \cap \mathbb{M}_2(P)$.
Thus, the vector space $ \mathbb{DM}(P) :=
\mathbb{M}_1(P) \cap \mathbb{M}_2(P)$, called \enquote{double generalized ansatz space} in the following, contains
all block-symmetric pencils from $\mathbb{M}_1(P)$ and $\mathbb{M}_2(P)$.
Similarly, in the monomial case, the double ansatz space $\mathbb{DL}(P) = \mathbb{L}_1(P) \cap \mathbb{L}_2(P)$
contains all block-symmetric pencils from $\mathbb{L}_1(P),$ see \cite{HigMMT06}.
We now give a rather surprising statement on block-skew-symmetric pencils in $\mathbb{M}_1(P)$.

\begin{proposition} \label{prop_skewsym}
Let $P(\lambda)$ be an $n \times n$ regular or singular matrix polynomial of degree $k \geq 2$ and let $\mathcal{L}(\lambda) \in \mathbb{M}_1(P)$ be block-skew-symmetric.
Then $\mathcal{L}(\lambda)$ satisfying (\ref{ansatzequation}) with $v = [ \, 0 \; \, v_2 \; \, v_3 \; \, \cdots \; \, v_k \, ]^T
\in \mathbb{R}^k$ implies $\mathcal{L}(\lambda) \equiv 0$.
\end{proposition}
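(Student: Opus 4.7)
The plan is to invoke Theorem \ref{thm_master1} and write $\mathcal{L}(\lambda) = [\,v \otimes I_n \;\; B\,]\, F_\Phi^P(\lambda)$ for some $B \in \mathbb{R}^{kn \times (k-1)n}$, then extract the block-skew-symmetry constraints on the leading and constant coefficients of $\mathcal{L}(\lambda)$ to conclude successively that $v = 0$ and $B = 0$.

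The cleanest route to $v = 0$ is a \emph{bi-product trick}: consider the expression $(\Phi_k(\lambda)^T \otimes I_n)\, \mathcal{L}(\lambda)\, (\Phi_k(\lambda) \otimes I_n)$. Expanding $\mathcal{L}(\lambda) = \sum_{i,j=1}^k e_i e_j^T \otimes \mathcal{L}_{ij}(\lambda)$ yields $\sum_{i,j} \phi_{k-i}(\lambda)\phi_{k-j}(\lambda)\mathcal{L}_{ij}(\lambda)$, which is manifestly symmetric in the block indices $i \leftrightarrow j$, so replacing $\mathcal{L}$ by $\mathcal{L}^\mathcal{B}$ produces the same expression. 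Block-skew-symmetry $\mathcal{L} = -\mathcal{L}^\mathcal{B}$ therefore forces it to equal its own negative, hence to vanish. Applying the ansatz equation (\ref{ansatzequation}) to rewrite the same quantity as $(\Phi_k(\lambda)^T v)\, P(\lambda)$ and using that $P(\lambda) \not\equiv 0$ (because $P_k \neq 0$) together with the fact that $\mathbb{R}[\lambda]$ is an integral domain, the scalar polynomial $\Phi_k(\lambda)^T v = \sum v_i \phi_{k-i}(\lambda)$ must be identically zero; linear independence of $\phi_0, \ldots, \phi_{k-1}$ yields $v = 0$. (In particular, the hypothesis $v_1 = 0$ is in fact automatic.)

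With $v = 0$ the pencil reduces to $\mathcal{L}(\lambda) = B M_\Phi(\lambda)$. From the explicit form of $M_\Phi^\star(\lambda)$, the $(i,j)$-block of $BM_\Phi(\lambda)$ has $\lambda$-coefficient $B_{i, j-1}$ (with $B_{i, 0} := 0$) and constant term $-\gamma_{k-j+1}B_{i, j-2} - \beta_{k-j}B_{i, j-1} - \alpha_{k-j-1}B_{i,j}$ (with boundary terms understood as zero). Block-skew-symmetry of the leading coefficient $[\,0 \;\; B\,]$ immediately forces the first row of $B$ to vanish and imposes the antisymmetry $B_{i, j-1} = -B_{j, i-1}$ for $i, j \geq 2$. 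I then proceed by induction on $s$, alternating row- and column-vanishing: assuming rows $1, \ldots, s$ and columns $1, \ldots, s-1$ of $B$ are zero, block-skew-symmetry of the constant coefficient at position $(i, s)$ collapses to $-\alpha_{k-s-1}B_{i, s} = 0$ (since $(BM_\Phi(0))_{s, i} = 0$ by vanishing of row $s$, and the remaining summands vanish by prior inductive steps), so column $s$ of $B$ vanishes; the antisymmetry relation $B_{i, s} = -B_{s+1, i-1}$ then propagates this to the $(s+1)$-th row. Running the induction through $s = k - 1$ yields $B = 0$ and hence $\mathcal{L} \equiv 0$.

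The main obstacle is the index bookkeeping in the last paragraph—accounting for missing $\gamma$- and $\alpha$-terms at the boundaries of $M_\Phi^\star(\lambda)$, and for the asymmetry that $B$ has $k$ block rows but only $k-1$ block columns. The bi-product trick of the second paragraph is, by contrast, very compact and bypasses the fine block structure entirely.
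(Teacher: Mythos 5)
Your proof is correct, but it takes a genuinely different route from the paper's. The paper works entirely with leading principal submatrices of $\mathcal{L}(\lambda)=[\,v\otimes I_n\;~B\,]F_{\Phi}^P(\lambda)$: block-skew-symmetry of the leading coefficient forces the first block row of $B$ to equal $-[\,v_2\;\cdots\;v_k\,]\otimes\alpha_{k-1}^{-1}P_k$ and the remaining part $B^\star$ to be block-skew-symmetric, and then an induction on $[\mathcal{L}(\lambda)]_{in}$ kills $v_{i+1}$ and the blocks $B_{1,i},\dots,B_{i-1,i}$ simultaneously at each step, using the hypothesis $v_1=0$ to get started. Your bi-product identity $(\Phi_k(\lambda)^T\otimes I_n)\mathcal{L}(\lambda)(\Phi_k(\lambda)\otimes I_n)=(\Phi_k(\lambda)^Tv)\,P(\lambda)$, combined with the invariance of that sandwich under block-transposition, is a cleaner and more conceptual way to obtain $v=0$: it bypasses the submatrix bookkeeping and shows that the hypothesis $v_1=0$ in the statement is in fact redundant---block-skew-symmetry alone annihilates the whole ansatz vector. (This does not contradict the paper; it just means that in the proof of Theorem \ref{theo3} the preliminary step establishing $\widetilde{v}_1=0$ would become unnecessary with your version of the proposition.) Your second stage---the alternating row/column induction on $B$ driven by the tridiagonal structure of $M_\Phi(\lambda)$ and the nonvanishing of the $\alpha_j$---involves essentially the same amount of index bookkeeping as the paper's induction, just organized around $BM_\Phi(\lambda)$ rather than around principal submatrices; the boundary cases ($s=1$, where the $\gamma$- and $\beta$-terms drop out, and $s=k-1$, where the surviving coefficient is $\alpha_0\neq 0$) go through as you indicate. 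In short: correct, with a slicker first half and a comparable second half.
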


\begin{proof}
Let
\begin{align} \mathcal{L}(\lambda) &= [ \, v \otimes I_n \;~ B \, ]F_{\Phi}^P(\lambda) = [ \, v \otimes
\alpha_{k-1}^{-1}P_k \;~ B]
\lambda + [ \, v \otimes I_n \;~ B \, ]F_{\Phi}^P(0) \notag \\
&= - \begin{bmatrix} v^T \otimes \alpha_{k-1}^{-1}P_k \\ B^{\mathcal{B}} \end{bmatrix} \lambda - F_{\Phi}^P(0)^{\mathcal{B}}
\begin{bmatrix} v^T \otimes I_n \\ B^{\mathcal{B}} \end{bmatrix}
\label{blocksym1} \end{align}
be block-skew-symmetric and assume $v = [ \, 0 \; \, v_2 \; \, v_3 \; \, \cdots \; \, v_k \, ]^T$. Regarding (\ref{blocksym1}),
the
block-skew-symmetry of
$\mathcal{L}(\lambda)$ a priori implies $B$
  to have the form $$ B = \left[ \begin{array}{c} Z \\ B^\star \end{array} \right]$$
with $Z = [ \, v_2 \; \, v_3 \; \, \cdots \; \, v_k] \otimes (-\alpha_{k-1}^{-1}P_k) \in \mathbb{R}^{n \times (k-1)n}$ and a
block-skew-symmetric $(k-1)n \times (k-1)n$ matrix $B^\star$. Let $B^\star = [B^\star_{i,j}]_{i,j=1}^{k-1}$ with $B^\star_{i,j}
\in \mathbb{R}^{n
\times n}$. The block-skew-symmetry then
implies $B^\star_{j,j}=0_n$ for all $j = 1, \ldots , k-1$. Now we consider the leading principal submatrices of
$\mathcal{L}(\lambda)$ which certainly all have to be block-skew-symmetric.

Since $[\mathcal{L}(\lambda)]_n = \alpha_{k-2} \alpha_{k-1}^{-1} v_2P_k = 0$, we have $v_2=0$. Now choose an
index $2 \leq i \leq k-1$ and assume $v_1 = v_2 = \cdots = v_i=0$ and $[B^\star]_{(i-1)n}=0$.\footnote{Notice that these
conditions are satisfied for $i=2$.} Then the $in \times in$ leading
principal submatrix $\big[ \mathcal{L}(\lambda) \big]_{in}$ of $\mathcal{L}(\lambda)$ takes in absolute value $\left| [
\mathcal{L}(\lambda) ]_{in} \right|$ the form
$$ \left| \big[ \mathcal{L}(\lambda) \big]_{in} \right| =   \left| \left[ \begin{array}{ccc|c} 0 & \cdots & 0 &  \alpha_{k-1-i}
\alpha_{k-1}^{-1} v_{i+1}P_k  \\ \vdots & & \vdots &  \alpha_{k-1-i}B_{1,i} \\ \vdots & & \vdots & \vdots \\ 0 & \cdots & 0 &
\alpha_{k-1-i}B_{i-1,i}
 \end{array} \right] \right| .  $$
Since $[ \mathcal{L}(\lambda) ]_{in}$ is block-skew-symmetric it follows that $B_{1,i}= \cdots = B_{i-1,i}=0_n$ and in
particular $v_{i+1}=0$. Therefore we have shown that $v_1 = \cdots = v_{i+1}=0$ and that $[B^\star]_{in} =0$.
Inductively, $i=k-1$
yields $v=0$ and
$B^\star=0$.
\end{proof}

Using Proposition \ref{prop_skewsym} we assume $P(\lambda)$ to be an arbitrary $n \times n$ matrix polynomial and
obtain a simple proof of the following theorem.

\begin{theorem}\label{theo3}
Let $P(\lambda)$ be an $n \times n$ regular or singular matrix of degree $k \geq 2$. Then any matrix pencil $\mathcal{L}(\lambda) \in \mathbb{DM}(P)$ is
block-symmetric.
\end{theorem}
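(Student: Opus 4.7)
The plan is to reduce the block-symmetry assertion to a statement about block-skew-symmetric pencils, exploiting Proposition \ref{prop_skewsym} from the preceding development. The core idea is: any pencil in $\mathbb{DM}(P)$ minus its own block-transpose lies in $\mathbb{M}_1(P)$ and is block-skew-symmetric, and such pencils are forced to vanish.

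More concretely, I would proceed as follows. Take $\mathcal{L}(\lambda) \in \mathbb{DM}(P) = \mathbb{M}_1(P) \cap \mathbb{M}_2(P)$ with ansatz vectors $v_1, v_2 \in \mathbb{R}^k$ coming from (\ref{ansatzequation}) and (\ref{ansatzequation2}), respectively. A direct inspection of the definitions (\ref{ansatzequation}) and (\ref{ansatzequation2}) shows that the block-transposition operation $\mathcal{M}(\lambda) \mapsto \mathcal{M}(\lambda)^{\mathcal{B}}$ exchanges $\mathbb{M}_1(P)$ and $\mathbb{M}_2(P)$ while preserving ansatz vectors. Hence $\mathcal{L}(\lambda)^{\mathcal{B}} \in \mathbb{M}_1(P)$ with ansatz vector $v_2$, and the difference
\[
\Delta(\lambda) := \mathcal{L}(\lambda) - \mathcal{L}(\lambda)^{\mathcal{B}} \in \mathbb{M}_1(P)
\]
has ansatz vector $v_1 - v_2$ and is, by construction, block-skew-symmetric.

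To invoke Proposition \ref{prop_skewsym} we need the first component of the ansatz vector $v_1 - v_2$ to vanish. This follows from a trivial diagonal-block inspection: using the characterization from Theorem \ref{thm_master1}, the $\lambda$-coefficient of $\Delta(\lambda)$ has leading block $[(v_1-v_2) \otimes \alpha_{k-1}^{-1}P_k \;~ B']$ for some $B'$, so its $(1,1)$-block equals $(v_1-v_2)_1 \alpha_{k-1}^{-1} P_k$. Block-skew-symmetry forces every diagonal block (and in particular this one) to be zero, and since $P_k \neq 0$ we obtain $(v_1-v_2)_1 = 0$.

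With this verified, Proposition \ref{prop_skewsym} applies to $\Delta(\lambda)$ and yields $\Delta(\lambda) \equiv 0$, i.e., $\mathcal{L}(\lambda) = \mathcal{L}(\lambda)^{\mathcal{B}}$, which is precisely block-symmetry. I do not expect any real obstacle here: the whole argument rests on the correct bookkeeping of ansatz vectors under block-transposition and on the already-proven Proposition \ref{prop_skewsym}; the only non-cosmetic verification is the single diagonal-block computation showing that the first ansatz coordinate of a block-skew-symmetric pencil in $\mathbb{M}_1(P)$ is forced to be zero.
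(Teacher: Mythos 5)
Your proof is correct and follows essentially the same route as the paper: both form the difference $\mathcal{L}(\lambda)-\mathcal{L}(\lambda)^{\mathcal{B}}$, observe it is a block-skew-symmetric element of $\mathbb{M}_1(P)$ whose ansatz vector has vanishing first component, and then invoke Proposition \ref{prop_skewsym}. The only (cosmetic) difference is that the paper gets $v_1=w_1$ by comparing the $(1,1)$ block of the $\lambda$-coefficient in the two representations of $\mathcal{L}(\lambda)$ itself, whereas you read the same fact off the vanishing diagonal block of the skew-symmetric difference.
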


\begin{proof}
  Let $\mathcal{L}(\lambda) \in \mathbb{DM}(P)$. Then $\mathcal{L}(\lambda)$ can be expressed as
  \begin{align*} \mathcal{L}(\lambda) &= v \otimes m_\Phi^P(\lambda) + B_1M_\Phi(\lambda) \\ &= w^T \otimes m_\Phi^P(\lambda)^{\mathcal{B}} +
M_\Phi(\lambda)^{\mathcal{B}}B_2^{\mathcal{B}}
\end{align*}
  as an element of $\mathbb{M}_1(P)$ and $\mathbb{M}_2(P)$ respectively. Regarding $\mathcal{L}(\lambda)$ in the form
$\mathcal{L}(\lambda) = X \lambda + Y$ this shows that $[X]_n = v_1 \alpha_{k-1}^{-1}P_k = w_1 \alpha_{k-1}^{-1}P_k$. Thus it
follows that $v_1 =
w_1$. Now note that $\mathcal{L}(\lambda)$ (seen as an element of $\mathbb{M}_2(P)$) via block-transposition becomes an element
  of $\mathbb{M}_1(P)$. Therefore
 \begin{align*}
 \widetilde{\mathcal{L}}(\lambda) := \mathcal{L}(\lambda) - \mathcal{L}(\lambda)^{\mathcal{B}} &= (v-w) \otimes
m_\Phi^P(\lambda) + (B_1-B_2)M_\Phi(\lambda) \\ &=: \widetilde{v} \otimes m_\Phi^P(\lambda) + \widetilde{B}M_\Phi(\lambda)
 \end{align*}
is a block-skew-symmetric pencil in $\mathbb{M}_1(P)$. Since $\widetilde{v} = [ \, 0 \; \, \widetilde{v}_2 \; \, \widetilde{v}_3
\; \, \cdots \; \, \widetilde{v}_k \, ]^T$,
applying Proposition \ref{prop_skewsym} to $\widetilde{\mathcal{L}}(\lambda)$ we obtain $\mathcal{L}(\lambda) =
\mathcal{L}(\lambda)^{\mathcal{B}}$.
\end{proof}

Following the previous proof we obtain the next result on the explicit form of pencils in the double generalized
ansatz space.

\begin{corollary} \label{cor_DMpencils}
Let $P(\lambda)$ be an $n \times n$ regular or singular matrix polynomial of degree $k \geq 2$ and
 $$ \mathcal{L}(\lambda) = \big[ \, v \otimes I_n \;~ B_1 \, ]F_{\Phi}^P(\lambda) =
F_{\Phi}^P(\lambda)^{\mathcal{B}} \begin{bmatrix} w^T \otimes I_n \\ B_2 \end{bmatrix} \in \mathbb{DM}(P).$$
Then $v=w$ and $B_1 = B_2^\mathcal{B}$.
\end{corollary}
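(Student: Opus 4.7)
The plan is to combine the block-symmetry result of Theorem \ref{theo3} with the uniqueness of the $\mathbb{M}_1(P)$-representation provided by Theorem \ref{thm_master1}. Since $\mathcal{L}(\lambda) \in \mathbb{DM}(P)$, Theorem \ref{theo3} gives $\mathcal{L}(\lambda) = \mathcal{L}(\lambda)^{\mathcal{B}}$, so if I can rewrite $\mathcal{L}(\lambda)^{\mathcal{B}}$ as $[\,w \otimes I_n \;~ B_2^{\mathcal{B}}\,]F_{\Phi}^P(\lambda)$, then comparing the two $\mathbb{M}_1(P)$-representations and invoking uniqueness immediately yields $v=w$ and $B_1 = B_2^{\mathcal{B}}$.

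To produce the new $\mathbb{M}_1(P)$-representation of $\mathcal{L}(\lambda)^{\mathcal{B}}$, I would first use the partition $F_{\Phi}^P(\lambda)^{\mathcal{B}} = [\,m_{\Phi}^P(\lambda)^{\mathcal{B}} \;~ M_{\Phi}(\lambda)^{\mathcal{B}}\,]$ to expand the given $\mathbb{M}_2$-representation as
\[
\mathcal{L}(\lambda) = m_{\Phi}^P(\lambda)^{\mathcal{B}}(w^T \otimes I_n) + M_{\Phi}(\lambda)^{\mathcal{B}} B_2,
\]
and then take block-transpose summand by summand. A direct inspection of the $(i,j)$-blocks shows $(m_{\Phi}^P(\lambda)^{\mathcal{B}}(w^T \otimes I_n))^{\mathcal{B}} = w \otimes m_{\Phi}^P(\lambda)$, since both sides have $(i,j)$-block $w_i m_j$ where $m_j$ is the $j$-th block of $m_{\Phi}^P(\lambda)$. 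For the second summand, the decisive observation is that $M_{\Phi}(\lambda) = M_{\Phi}^\star(\lambda) \otimes I_n$ consists of blocks that are scalar multiples of $I_n$; these commute with any neighboring $n \times n$ block, which yields the product-reversal identity $(M_{\Phi}(\lambda)^{\mathcal{B}} B_2)^{\mathcal{B}} = B_2^{\mathcal{B}} M_{\Phi}(\lambda)$. Combining the two pieces gives
\[
\mathcal{L}(\lambda)^{\mathcal{B}} = w \otimes m_{\Phi}^P(\lambda) + B_2^{\mathcal{B}} M_{\Phi}(\lambda) = [\,w \otimes I_n \;~ B_2^{\mathcal{B}}\,] F_{\Phi}^P(\lambda),
\]
which is precisely an $\mathbb{M}_1(P)$-representation with ansatz vector $w$ and matrix $B_2^{\mathcal{B}}$.

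Finally, using $\mathcal{L}(\lambda) = \mathcal{L}(\lambda)^{\mathcal{B}}$ from Theorem \ref{theo3}, we obtain
\[
[\,v \otimes I_n \;~ B_1\,]F_{\Phi}^P(\lambda) = [\,w \otimes I_n \;~ B_2^{\mathcal{B}}\,]F_{\Phi}^P(\lambda),
\]
and the isomorphism $\mathbb{M}_1(P) \cong \mathbb{R}^k \times \mathbb{R}^{kn \times (k-1)n}$ established by Theorem \ref{thm_master1} forces the representing pairs to agree, so $v = w$ and $B_1 = B_2^{\mathcal{B}}$.

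The main obstacle is verifying the block-transpose product-reversal identity $(M_{\Phi}(\lambda)^{\mathcal{B}} B_2)^{\mathcal{B}} = B_2^{\mathcal{B}} M_{\Phi}(\lambda)$, which fails for general block matrices (block-transpose does not interact with matrix multiplication the way ordinary transpose does) and holds here only because the blocks of $M_{\Phi}(\lambda)$ are scalar multiples of $I_n$ and hence commute with everything.
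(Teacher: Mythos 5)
Your proposal is correct and follows essentially the same route as the paper: the paper's proof is the remark ``following the previous proof,'' i.e.\ it reuses the computation from Theorem~\ref{theo3} in which the $\mathbb{M}_2$-representation is block-transposed into the $\mathbb{M}_1$-representation $[\,w\otimes I_n\;~B_2^{\mathcal{B}}\,]F_{\Phi}^P(\lambda)$ and the unique $(v,B)$-parametrization from Theorem~\ref{thm_master1} is invoked. Your explicit verification of $(M_{\Phi}(\lambda)^{\mathcal{B}}B_2)^{\mathcal{B}}=B_2^{\mathcal{B}}M_{\Phi}(\lambda)$, valid because the blocks of $M_{\Phi}(\lambda)$ are scalar multiples of $I_n$, is exactly the point the paper leaves implicit.
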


Do not overlook that pencils in $\mathbb{DM}(P)$ not only have to have equal left and right ansatz
vectors. Corollary \ref{cor_DMpencils} makes a stronger statement. In fact, the matrices $B_1$ and $B_2$ are
additionally related to each other as in a way that $B_1 = B_2^\mathcal{B}$.

Recalling that any block-symmetric matrix pencil $\mathcal{L}(\lambda) = \mathcal{L}(\lambda)^{\mathcal{B}}$ from
$\mathbb{M}_1(P)$ is in $\mathbb{DM}(P)$ we obtain
\[
 \mathbb{DM}(P) = \big\lbrace \mathcal{L}(\lambda) \in \mathbb{M}_1(P) \; \big| \; \mathcal{L}(\lambda) =
\mathcal{L}(\lambda)^{\mathcal{B}} \big\rbrace.
\]

Clearly, also all pencils in $\mathbb{DL}(P)$ for matrix polynomials $P(\lambda)$
in monomial basis are block-symmetric. This has first been proven in \cite{HigMMT06}. At the end of this section we show that a result similar to \cite[Thm. 6.1]{TerDM09} holds for the generalized ansatz space $\mathbb{DM}(P)$. In particular, we may restrict the study of $\mathbb{DM}(P)$ to regular matrix polynomials due to the following theorem.

\begin{theorem} \label{nolinearization}
Let $P(\lambda)$ be an $n \times n$ singular matrix polynomial. Then none of the pencils in $\mathbb{DM}(P)$ is a linearization for $P(\lambda)$.
\end{theorem}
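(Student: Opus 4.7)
Suppose toward a contradiction that $\mathcal{L}(\lambda) \in \mathbb{DM}(P)$ is a linearization for the singular polynomial $P(\lambda)$, and set $r := \textnormal{rank}_{\mathbb{R}(\lambda)} P < n$ and $s := n - r \geq 1$. The plan is to derive a numerical impossibility from the minimal-index structure of the nullspaces of $\mathcal{L}$. First I would rule out $v = 0$: otherwise $\mathcal{L}(\Phi_k(\lambda) \otimes I_n) = 0$, so the right nullspace $\mathcal{N}_r(\mathcal{L})$ contains the $n$-dimensional rational subspace $(\Phi_k(\lambda) \otimes I_n)\,\mathbb{R}(\lambda)^n$, contradicting $\dim \mathcal{N}_r(\mathcal{L}) = s < n$ forced by the linearization property.

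With $v \neq 0$, I would exploit the double membership $\mathcal{L} \in \mathbb{M}_1(P) \cap \mathbb{M}_2(P)$. The ansatz equations yield $\Phi_k \otimes \mathcal{N}_r(P) \subseteq \mathcal{N}_r(\mathcal{L})$ and $\Phi_k \otimes \mathcal{N}_\ell(P) \subseteq \mathcal{N}_\ell(\mathcal{L})$; since $\mathcal{L}$ is a linearization both sides of each inclusion have $\mathbb{R}(\lambda)$-dimension $s$, so equality holds. Because $\phi_0 = 1$, every polynomial element of $\Phi_k(\lambda) \otimes \mathcal{N}_\ell(P)$ must take the form $\Phi_k \otimes w$ with $w \in \mathcal{N}_\ell(P)$ polynomial, and its total degree is exactly $(k-1) + \deg w$; the symmetric statement holds on the right. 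Consequently the pencil $\mathcal{L}$ has left and right minimal indices
$$\eta_i = (k-1) + \eta'_i, \qquad \epsilon_i = (k-1) + \epsilon'_i,$$
with $\eta'_i, \epsilon'_i$ denoting those of $P$. In addition, for every nonzero $u = \Phi_k \otimes y \in \mathcal{N}_\ell(\mathcal{L})$ we have $u^T(v \otimes I_n) = (v^T \Phi_k)\,y^T \neq 0$, since $v \neq 0$ and $\{\phi_0, \ldots, \phi_{k-1}\}$ is linearly independent. Theorem \ref{thm_eetsingular} therefore forces $\textnormal{rank}([\,v \otimes I_n \mid B\,]) = kn$, and Corollary \ref{cor_lincondition1}(1) upgrades $\mathcal{L}$ to a \emph{strong} linearization of $P$; in particular $d_\infty(\mathcal{L}) = d_\infty(P)$.

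The contradiction is now a counting one. Applying the index sum theorem for matrix polynomials to $P$ (grade $k$, rank $r$) and to the pencil $\mathcal{L}$ (grade $1$, rank $r + (k-1)n$), and using $d_f(\mathcal{L}) = d_f(P)$ together with $d_\infty(\mathcal{L}) = d_\infty(P)$, subtraction of the two identities gives
$$\sum_i (\eta_i - \eta'_i) + \sum_i (\epsilon_i - \epsilon'_i) = (k-1)s.$$
However, the minimal-index formulas above evaluate the same sum as $2(k-1)s$. Together these force $(k-1)s = 0$, which is impossible for $k \geq 2$ and $s \geq 1$. The main obstacle in carrying this out is the middle step: one must argue carefully that the rigid Kronecker-product form of each nullspace really enforces the full shift of $k-1$ \emph{individually} on both sides (not merely in aggregate), because it is precisely this doubled shift that collides with the single-shift budget $(k-1)s$ dictated by the index sum theorem.
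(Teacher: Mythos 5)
Your argument is correct, but it reaches the contradiction by a genuinely different route than the paper. The paper's proof imports the minimal-index recovery theorems of \cite{TerDM09} wholesale: viewing $\mathcal{L}(\lambda)$ in $\mathbb{M}_1(P)$ with full rank factor, the right minimal indices of $\mathcal{L}(\lambda)$ are those of $P(\lambda)$ shifted by $k-1$, whereas viewing it in $\mathbb{M}_2(P)$ they are \emph{unshifted} --- an immediate clash, since a square singular $P(\lambda)$ has at least one right minimal index --- and the rank-deficient case is dispatched separately because constant left null vectors of $[\, v \otimes I_n \;~ B\,]$ force zero left minimal indices. You instead extract the shift of \emph{both} families of minimal indices by $k-1$ directly from the two ansatz equations (the inclusions $\Phi_k \otimes \mathcal{N}_r(P) \subseteq \mathcal{N}_r(\mathcal{L})$ and $\Phi_k \otimes \mathcal{N}_\ell(P) \subseteq \mathcal{N}_\ell(\mathcal{L})$ become equalities by the dimension count that the linearization hypothesis provides), use Theorem \ref{thm_eetsingular} to promote $\mathcal{L}(\lambda)$ to a strong linearization of full $Z$-rank, and then collide the doubled shift $2(k-1)s$ with the budget $(k-1)s$ dictated by the Index Sum Theorem. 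The step you flag as the main obstacle is genuinely fillable: since $\phi_0 = 1$, the map $w \mapsto \Phi_k \otimes w$ is a bijection between polynomial vectors of $\mathcal{N}(P)$ and of $\Phi_k \otimes \mathcal{N}(P)$ that raises degree by exactly $k-1$ and preserves $\mathbb{R}(\lambda)$-independence, so it carries minimal bases to minimal bases (a polynomial basis of smaller total degree upstairs would pull back to one of smaller total degree downstairs), which yields the individual shift on each index. The trade-off between the two proofs is clear: your middle section is more elementary and self-contained than the paper's black-box appeal to Theorems 7.2 and 7.3 of \cite{TerDM09} (in particular you never need the harder ``unshifted'' halves of those results, which encode the structure of $\mathcal{N}_\ell(F_{\Phi}^P)$), but your closing step leans on the Index Sum Theorem, a heavier external result than anything the paper invokes and somewhat at odds with its stated aim of keeping proofs on a basic algebraic level.
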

\begin{proof} The proof follows exactly the same argumentation as that of \cite[Thm. 6.1]{TerDM09}. Assume $\mathcal{L}(\lambda) \in \mathbb{DM}(P)$ with $\textnormal{rank}([ \, v \otimes I_n \;~ B \, ])=kn$. According to \cite[Th. 7.2]{TerDM09}, seeing $\mathcal{L}(\lambda)$ as an element of $\mathbb{M}_1(P)$, the right minimal indices of $\mathcal{L}(\lambda)$ are
$$ (k-1) + \epsilon_1 \leq (k-1) + \epsilon_2 \leq \cdots \leq (k-1) + \epsilon_p$$
if the right minimal indices of $P(\lambda)$ are $\epsilon_1 \leq \epsilon_2 \leq \cdots \leq \epsilon_p$. This leads to a contradiction with Theorem \cite[Thm. 7.3]{TerDM09} interpreting $\mathcal{L}(\lambda)$ as an element of $\mathbb{M}_2(P)$, \footnote{Note that the authors of \cite{TerDM09} restricted Section 7 to the study of the \textit{right ansatz} but emphasize that analogous results hold for the dual \textit{left ansatz}. In the proof of Theorem \ref{nolinearization} we use these results even though they are not explicitly stated in \cite{TerDM09}.} thus $\textnormal{rank}([ \, v \otimes I_n \;~ B \, ]) = s < kn$. But then there are $y_1, \ldots y_p \in \mathbb{R}^{kn}$ ($p=kn-s$) with $y_i^T[ \, v \otimes I_n \;~ B \, ]=0$ and therefore $y_i \in \mathcal{N}_\ell(\mathcal{L}(\lambda))$ for all $i=1, \ldots , p$. Thus $\mathcal{L}(\lambda)$ has at least $p$ left minimal indices equal to zero which again contradicts \cite[Thm. 7.2]{TerDM09} for $\mathcal{L}(\lambda)$ seen as an element of $\mathbb{M}_2(P)$.
\end{proof}

In \cite{TerDM09} it is shown that if $P(\lambda)$ is a singular matrix polynomial of degree $k \geq 2$, then none of the pencils
in $\mathbb{DL}(P)$ is a linearization of $P(\lambda).$
Different, larger vector spaces of block-symmetric strong linearizations of matrix polynomials in the monomial basis have been
proposed in \cite{BueDFR15}.

\section{Construction of block-symmetric Pencils}\label{sec5}
This section is dedicated to the construction of pencils in $\mathbb{DM}(P)$ for regular $P(\lambda)$. It turns out that the
characterization
(\ref{thm_charM1}) yields a simple procedure to construct block-symmetric pencils.
As before, assume $P(\lambda)$ to be of the form (\ref{expr_P}) with $\textnormal{deg}(P(\lambda)) \geq 2$. Moreover, let
$\mathcal{L}(\lambda)$ be an element of $\mathbb{M}_1(P)$ as in (\ref{blocksym1}), i.e.
$$\mathcal{L}(\lambda) = [ \, v \otimes I_n \;~ B \, ]F_{\Phi}^P(\lambda) = [ \, v \otimes
\alpha_{k-1}^{-1}P_k \;~ B]
\lambda + [ \, v \otimes I_n \;~ B \, ]F_{\Phi}^P(0). $$
Similar to the block-skew-symmetric
case, $[ \, v \otimes
\alpha_{k-1}^{-1}P_k \;~ B]$ being block-symmetric implies
\[
B= \begin{bmatrix} Z \\ B^\star \end{bmatrix}
\]
with $Z = [ \, v_2 \; v_3 \; \cdots \; v_k \, ] \otimes \alpha_{k-1}^{-1}P_k$ and a $(k-1)n \times (k-1)n$ block-symmetric
matrix $B^\star$. Therefore, considering $B^\star$ as a $(k-1) \times (k-1)$ block matrix with $n \times n$ blocks, it suffices to compute the blocks of the lower triangular part of $B^\star$, that is the blocks $B_{ij}^\star$ with $i \geq j$,\footnote{Notice that, in terms of $B$,
it holds that $B_{s,t} =
B_{t+1,s-1}$ for $t < s$ and $s \geq 2$.} and to only consider $[ \, v \otimes I_n \;~ B \,] F_{\Phi}^P(0)
=\mathcal{L}(0)$ for the remaining derivations.

The block-symmetry certainly requires
\begin{equation}
 (e_i^T \otimes I_n) \mathcal{L}(0) (e_1 \otimes I_n) = (e_1^T \otimes I_n) \mathcal{L}(0) (e_i \otimes I_n).
\label{equ_blocksym}
\end{equation}
As the first block column $v\otimes I_n$ and the first block row $Z$ of $\mathcal{L}(0)$ are already known,
equation (\ref{equ_blocksym}) reads for $2 \leq i \leq k$
\begin{align*} v_i \bigg( - \frac{\beta_{k-1}}{\alpha_{k-1}}P_k &+ P_{k-1} \bigg) - \alpha_{k-2} B_{i,1} \\ & = v_1
P_{k-i} - \frac{ \big( v_{i-1} \gamma_{k-i+1}  + v_{i}
\beta_{k-i} + v_{i+1} \alpha_{k-i-1} \big) }{\alpha_{k-1}} P_k \end{align*}
whereby we set $v_{k+1}= \alpha_{-1}=0$ for $i=k$.\footnote{Terms involving $\alpha_{-1}$ will show up for $i=k$ in
subsequent formulas, too. We will always assume $\alpha_{-1}=0$. } This can easily be solved for the matrix $B_{i,1}$ and yields
\begin{equation} \begin{aligned} B_{i,1} &= \frac{(v_{i-1} \gamma_{k-i+1} + v_{i} (\beta_{k-i} - \beta_{k-1}) + v_{i+1}
\alpha_{k-i-1})P_k}{\alpha_{k-1} \alpha_{k-2}} \\ & \hspace{1cm} + \frac{(v_iP_{k-1} - v_1P_{k-i})}{\alpha_{k-2}}.
\end{aligned} \label{blocksym0} \end{equation}
In this way, the blocks $B_{i,1}$ can be computed for all $i=2, \ldots , k$. Due to the block-symmetry of $B^\star$ this
completely and uniquely determines the first block column and block row of $B^\star$. In the same way, considering
\begin{equation*}
 (e_i^T \otimes I_n) \mathcal{L}(0) (e_2 \otimes I_n) = (e_2^T \otimes I_n) \mathcal{L}(0) (e_i \otimes I_n).
\end{equation*}
gives the equation
\begin{align*} v_i \bigg( P_{k-2} &- \frac{ \gamma_{k-1}}{\alpha_{k-1}}P_k \bigg) - \beta_{k-2}B_{i,1} - \alpha_{k-3} B_{i,2} \\
&= v_2 P_{k-i} - ( \gamma_{k-i+1} B_{2,i-2} + \beta_{k-i} B_{2,i-1} + \alpha_{k-i-1} B_{2,i} ). \end{align*}
It follows from the block-symmetry of $B^\star$ that $B_{2,i-2} = B_{i-1,1}$, $B_{2,i-1} = B_{i,1}$ and
$B_{2,i}=B_{i+1,1}$. Thus we obtain an explicit expression for $B_{i,2}$:
\begin{equation}
\begin{aligned} B_{i,2} &= \frac{( \gamma_{k-i+1}B_{i-1,1} + (\beta_{k-i} - \beta_{k-2})B_{i,1} +
\alpha_{k-i-1}B_{i+1,1})}{\alpha_{k-3}} \\  & \hspace{1cm} + \frac{(v_iP_{k-2} -
v_2P_{k-i})}{\alpha_{k-3}} - v_i \frac{\gamma_{k-1}}{\alpha_{k-3}\alpha_{k-1}}P_k. \end{aligned} \label{blocksym3}
\end{equation}
Due to the block-symmetry it suffices to consider (\ref{blocksym3}) only for $i \geq 3$. Therefore, the blocks $B_{3,2},
\ldots ,
B_{k,2}$ may be computed via (\ref{blocksym3}). Following the same pattern, the equation $(e_i^T \otimes I_n)
\mathcal{L}(0) (e_j \otimes I_n) = (e_j^T \otimes I_n) \mathcal{L}(0) (e_i \otimes I_n)$ yields in its most general form for $j
\geq 3$ and $i \geq j$
\begin{align}
  B_{i,j} &= \frac{\gamma_{k-i+1} B_{i-1,j-1} + ( \beta_{k-i} - \beta_{k-j}) B_{i,j-1} + \alpha_{k-i-1} B_{i+1,j-1} -
\gamma_{k-j+1} B_{i,j-2}}{\alpha_{k-j-1}} \notag \\ & \hspace{1cm} + \frac{(v_iP_{k-j} - v_jP_{k-i})}{\alpha_{k-j-1}}.
  \label{blocksym4}
\end{align}
Hence, we may interpret the blockwise computation of $B$ as some kind of updated
recurrence relation. Moreover, the derivation shows that (\ref{blocksym0}) - (\ref{blocksym4}) are sufficient and necessary for
$\mathcal{L}(\lambda) \in \mathbb{DM}(P)$ being block-symmetric and having ansatz vector $v$. \\

\noindent We summarize the procedure to compute block-symmetric pencils in $\mathbb{M}_1(P)$:
For any regular matrix polynomial $P(\lambda) \in \mathbb{R}^{n \times n}$ expressed in some orthogonal basis as in
(\ref{expr_P}) and of degree $ k \geq 2$ choose any $v \in \mathbb{R}^k$ and compute
$$ B = \begin{bmatrix} Z \\ B^\star \end{bmatrix} \in \mathbb{R}^{kn \times (k-1)n} \qquad B= [B_{i,j}], B_{i,j} \in
\mathbb{R}^{n \times n} $$
according to (\ref{blocksym0}) - (\ref{blocksym4}) and set $Z = [ \, v_2 \; v_3 \; \cdots \; v_k \,] \otimes \alpha_{k-1}^{-1}P_k$.
Then $$\mathcal{L}(\lambda) = [ \, v \otimes I_n \;~ B \, ] F_{\Phi}^P(\lambda)$$ is block-symmetric with ansatz vector $v$.

\begin{example}
  The Chebyshev polynomials of first kind follow the recurrence relation $$ \phi_{j+1}(\lambda) = 2 \lambda
\phi_j(\lambda) - \phi_{j-1}(\lambda) \qquad j \geq 1$$
with $\phi_1(\lambda)= \lambda$ and $\phi_0(\lambda)=1.$ Now let a matrix polynomial $P(\lambda) = P_3
\phi_3(\lambda) + P_2 \phi_2(\lambda) + P_1 \phi_1(\lambda) + P_0 \phi_0(\lambda)$ of degree $3$ be given in the
Chebyshev basis. According to (\ref{stronglin_F}) the strong linearization $F_{\Phi}^P(\lambda)$ has the form
$$ F_{\Phi}^P(\lambda) = \begin{bmatrix} 2 \lambda P_3 + P_2 & P_1 - P_3 & P_0 \\ - \tfrac{1}{2}I_n & \lambda
I_n & - \tfrac{1}{2}I_n \\ 0 & -I_n & \lambda I_n \end{bmatrix}. $$
Using the algorithm for the construction of block-symmetric pencils in $\mathbb{DM}(P)$ we may easily compute
the block-symmetric pencils that correspond to the standard unit vectors $v=e_1, e_2, e_3 \in \mathbb{R}^3$. In
particular we have
\begin{align*} \big[ \, e_1 \otimes I_n \;~ B_1 \, \big] &= \begin{bmatrix} I_n & 0_n & 0_n \\ 0_n &
2(P_3-P_1) & -2P_0 \\ 0_n & -2P_0 & P_3-P_1 \end{bmatrix}, \\ \big[ \, e_2 \otimes I_n \;~ B_2 \, \big] &=
\begin{bmatrix} 0_n & 2P_3 & 0_n \\ I_n & 2P_2 & 2P_3 \\ 0_n & 2P_3 & P_2-P_0 \end{bmatrix}, \\  \big[ \, e_3
\otimes I_n \;~ B_3 \, \big] &= \begin{bmatrix} 0_n & 0_n & 2P_3 \\ 0_n & 4P_3 & 2P_2 \\ I_n & 2P_2 & P_3+P_1
\end{bmatrix}.
\end{align*}
Notice that such pencils need not be (strong) linearizations for $P(\lambda)$. For instance, if $P(\lambda)$
is regular, $\mathcal{L}(\lambda) = [ \, e_3 \otimes I_n \;~ B_3 \, ]F_{\Phi}^P(\lambda)$ can only be a
linearization for $P(\lambda)$ when $P_3$ is nonsingular (due to the anti-lower-block-triangular form of $[ \, e_3 \otimes
I_n \;~ B_3 \, ]$).
\end{example}

Notice that the algorithmic approach for constructing block-symmetric pencils does not require a single
matrix-matrix-multiplication, instead only
scalar-matrix-mul\-ti\-pli\-cations are needed. The complexity of this procedure is $\mathcal{O}(k^2n^2)$, which
also is the complexity
of the construction algorithm presented in \cite[Sec. 7]{NakNT12}. Although there are structural similarities between both
algorithms, they rise from quite different viewpoints.

Fortunately, now we obtain the following corollary without real effort.

\begin{corollary} \label{cor_dimDM}
 For any $n \times n$ regular or singular matrix polynomial $P(\lambda)$ of degree $k \geq 2$ $$\textnormal{dim}(\mathbb{DM}(P)) = k.$$
\end{corollary}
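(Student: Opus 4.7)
The plan is to exhibit an explicit vector space isomorphism $\mathbb{R}^k \to \mathbb{DM}(P)$, using the construction algorithm just developed as the heart of the argument. Since $\mathbb{DM}(P)$ consists precisely of the block-symmetric pencils in $\mathbb{M}_1(P)$ (by Theorem \ref{theo3}), it suffices to show that the ansatz vector $v \in \mathbb{R}^k$ uniquely determines every such pencil, and that every $v$ yields one.

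First I would define a map $\Psi \colon \mathbb{R}^k \to \mathbb{DM}(P)$ as follows: for a given $v \in \mathbb{R}^k$, let $\Psi(v)$ be the pencil $[\,v \otimes I_n \;~ B\,] F_{\Phi}^P(\lambda)$ whose matrix $B$ is built from $v$ via the recurrences (\ref{blocksym0})--(\ref{blocksym4}) of the preceding construction. The preceding derivation establishes exactly two things about $\Psi$ that are needed here. First, the blocks $B_{i,j}$ produced by (\ref{blocksym0})--(\ref{blocksym4}) are the \emph{only} entries that make $[\,v \otimes I_n \;~ B\,] F_{\Phi}^P(\lambda)$ block-symmetric, so the output $\Psi(v)$ genuinely lies in $\mathbb{DM}(P)$ and, moreover, any element of $\mathbb{DM}(P)$ with ansatz vector $v$ must equal $\Psi(v)$. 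Second, the recurrences (\ref{blocksym0})--(\ref{blocksym4}) are \emph{linear} in the entries $v_1,\dots,v_k$ (and linear in the coefficient matrices $P_i$), so $\Psi$ is $\mathbb{R}$-linear.

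It remains to verify that $\Psi$ is bijective. Surjectivity is immediate: every $\mathcal{L}(\lambda) \in \mathbb{DM}(P) \subseteq \mathbb{M}_1(P)$ has by Theorem \ref{thm_master1} some ansatz vector $v$, and by the uniqueness just noted $\mathcal{L}(\lambda) = \Psi(v)$. For injectivity, suppose $\Psi(v) = 0$; since $\Psi(v)$ has ansatz vector $v$, one has $0 = \Psi(v)(\Phi_k(\lambda) \otimes I_n) = v \otimes P(\lambda)$, and as $P(\lambda) \neq 0$ this forces $v = 0$. Thus $\Psi$ is an isomorphism and $\dim \mathbb{DM}(P) = \dim \mathbb{R}^k = k$.

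The only point requiring some care—though it is essentially already handled by the construction section above—is the observation that the recurrences uniquely determine \emph{all} of $B$, including the last block column (for which terms involving $\alpha_{-1}$ must be interpreted via the convention $\alpha_{-1} = 0$, $v_{k+1} = 0$). Once this is granted, both the existence part (any $v$ yields a valid block-symmetric pencil) and the uniqueness part (two block-symmetric pencils with the same ansatz vector are identical) follow directly from the stepwise derivation of (\ref{blocksym0})--(\ref{blocksym4}), and the corollary is just the bookkeeping that $\Psi$ is an isomorphism of vector spaces.
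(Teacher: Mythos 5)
Your proof is correct and follows essentially the same route as the paper: both arguments rest on the fact that the recurrences (\ref{blocksym0})--(\ref{blocksym4}) determine $B$ uniquely and linearly from the ansatz vector $v$, the paper packaging this as the exhibition of a basis $\mathcal{B}_1(\lambda),\dots,\mathcal{B}_k(\lambda)$ indexed by the standard unit vectors, while you package it as an explicit isomorphism $\Psi\colon \mathbb{R}^k \to \mathbb{DM}(P)$. The two formulations are equivalent, and your injectivity argument via $v \otimes P(\lambda) = 0 \Rightarrow v = 0$ is a clean substitute for the paper's linear-independence remark.
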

\begin{proof}
First observe that  $\textnormal{dim}(\mathbb{DM}(P)) \geq k$ certainly holds because
$\mathcal{B}_1(\lambda),$ $\ldots,$ $\mathcal{B}_k(\lambda) \in \mathbb{DM}(P)$  with
$\mathcal{B}_j(\lambda) = \big[ \, e_j \otimes I_n \;~ B_j \, \big] F_{\Phi}^P(\lambda)$
are obviously linear independent. Now observe that any
$ \mathcal{L}(\lambda) = \sum_{i=1}^k \alpha_i \mathcal{B}_i(\lambda)$ for arbitrary coefficients $\alpha_i \in
\mathbb{R}$ is block-symmetric with
ansatz vector $v = \sum_{i=1}^k \alpha_i e_i$. Thus, whenever
any $\mathcal{L}^\star(\lambda) \in
\mathbb{DM}(P)$ has ansatz vector $v$, we necessarily have $\mathcal{L}(\lambda) = \mathcal{L}^\star(\lambda)$ due
to the uniqueness of the expressions (\ref{blocksym0}) - (\ref{blocksym4}). Thus
$\textnormal{dim}(\mathbb{DM}(P)) \leq k$ and Corollary \ref{cor_dimDM} follows. \end{proof}

\section{The Eigenvector Exclusion Theorem and the Recovery of left Eigenvectors}
\label{sec:eigenvectorexclusion}
In this section we present a new linearization condition for pencils in $\mathbb{M}_1(P)$ and $\mathbb{M}_2(P)$ that we call
\textit{Eigenvector Exclusion Theorem}. Notice the similarity to Theorem \ref{thm_eetsingular}. 

\begin{theorem}[Eigenvector Exclusion Theorem] \label{thm_eet}
 Let $P(\lambda)$ be an $n \times n$ regular matrix polynomial of degree $k \geq 2$ and assume
$\mathcal{L}(\lambda) \in
\mathbb{M}_1(P)$ with ansatz vector $v \in \mathbb{R}^k$. Then $\mathcal{L}(\lambda)$ is a strong linearization for $P(\lambda)$
if and only if
\begin{equation}
 u^T \big( v \otimes I_n \big) \neq 0 \label{eigenvectorexclusion}
\end{equation}
holds for any left eigenvector $u$ of $\mathcal{L}(\lambda)$ for every eigenvalue $\alpha$ of
$P(\lambda)$.
\end{theorem}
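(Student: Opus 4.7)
The plan is to leverage Theorem~\ref{thm_master1} and Corollary~\ref{cor_lincondition1}, which reduce the claim to an equivalence between the exclusion condition \eqref{eigenvectorexclusion} and invertibility of the coefficient matrix $C := [\,v \otimes I_n \;~ B\,]$ in the representation $\mathcal{L}(\lambda) = C\, F_{\Phi}^P(\lambda)$. Since $P(\lambda)$ is regular, Corollary~\ref{cor_lincondition1} already tells us that $\mathcal{L}(\lambda)$ is a strong linearization if and only if $C \in \textnormal{GL}_{kn}(\mathbb{R})$. So both implications boil down to linking this invertibility with \eqref{eigenvectorexclusion}.

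For the ``$\Leftarrow$'' direction I would argue by contrapositive. If $C$ is singular, pick any nonzero $q \in \mathbb{R}^{kn}$ with $q^T C = 0$. Then $q^T \mathcal{L}(\lambda) = q^T C\, F_{\Phi}^P(\lambda) \equiv 0$, so $q$ is a constant left null vector of $\mathcal{L}(\alpha)$ for \emph{every} scalar $\alpha$ and hence, in particular, for any eigenvalue $\alpha$ of $P(\lambda)$ (regular $P(\lambda)$ always has at least one finite or infinite eigenvalue). However $q^T(v \otimes I_n) = 0$ by construction, contradicting~\eqref{eigenvectorexclusion}.

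For the ``$\Rightarrow$'' direction, assume $C$ is invertible and let $u$ be a nonzero left eigenvector of $\mathcal{L}(\lambda)$ at an eigenvalue $\alpha$ of $P(\lambda)$. In the finite case, set $w^T := u^T C$; invertibility of $C$ forces $w \neq 0$ and $u^T \mathcal{L}(\alpha) = 0$ gives $w^T F_{\Phi}^P(\alpha) = 0$. Writing $w^T = [\,w_1^T \;~ w_2^T\,]$ with $w_1 \in \mathbb{C}^n$, the bottom block equation reads $w_2^T (M_\Phi^\star(\alpha) \otimes I_n) = -w_1^T m_\Phi^P(\alpha)$. If $w_1 = 0$ this would yield $w_2^T (M_\Phi^\star(\alpha) \otimes I_n) = 0$, but $M_\Phi^\star(\alpha)$ has full row rank $k-1$ for every $\alpha$ because its subdiagonal entries $-\alpha_{k-2}, \dots, -\alpha_0$ are all nonzero by~\eqref{def_orthbasis}; hence $w_2 = 0$, contradicting $w \neq 0$. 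Thus $w_1 \neq 0$, and by block structure $w_1^T = u^T(v \otimes I_n)$, proving~\eqref{eigenvectorexclusion}. For $\alpha = \infty$, $u$ lies in the left nullspace of the leading coefficient $X = [\,v \otimes \alpha_{k-1}^{-1} P_k \;~ B\,]$, so in particular $u^T B = 0$; then $u^T C = [\,u^T(v \otimes I_n) \;~ 0\,]$ must be nonzero by invertibility of $C$ and $u \neq 0$, so again $u^T(v \otimes I_n) \neq 0$.

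The delicate step I expect is the infinite-eigenvalue case of ``$\Rightarrow$'': the relation $u^T X = 0$ does not, on its own, reveal anything directly about $u^T(v \otimes I_n)$, and invertibility of $C$ must be invoked to convert the trivial consequence $u^T B = 0$ into the required nonvanishing. The finite case is conceptually straightforward but relies on the auxiliary fact that any nonzero left null vector of $F_{\Phi}^P(\alpha)$ has a nonzero first $n$-block, which in turn rests on the block-Hessenberg shape of $F_{\Phi}^P(\lambda)$ together with the hypothesis $\alpha_j \neq 0$ from~\eqref{def_orthbasis}.
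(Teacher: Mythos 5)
Your proof is correct and follows essentially the same route as the paper: both reduce the claim to the invertibility of $[\,v\otimes I_n \;~ B\,]$ via the Strong Linearization Theorem, use the structure of $F_\Phi^P(\alpha)$ (your full-row-rank argument for $M_\Phi^\star(\alpha)$ is the paper's successive-elimination step in disguise) to show that a left null vector of $F_\Phi^P(\alpha)$ with vanishing first block must vanish entirely, and handle the converse by exhibiting a constant left null vector $q$ when the coefficient matrix is singular. The only difference is that you spell out the $\alpha=\infty$ case explicitly (and neatly, via $u^TB=0$), which the paper merely sketches.
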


\begin{proof}
We confine ourselves to a sketch of the proof since it is similar to that of Theorem \ref{thm_eetsingular}. Assume that
$\mathcal{L}(\lambda)$ as given in (\ref{thm_charM1}) is a strong linearization
for $P(\lambda)$  and $u \in \mathcal{N}_\ell(\mathcal{L}(\alpha))$ for some eigenvalue
$\alpha$ of $P(\lambda)$, that is $u^T
\mathcal{L}(\alpha)=0$. Then, defining $w \in \mathbb{C}^{kn}$ as
 $$w^T = \big[ \, w_1 \; w_2 \; \cdots \; w_{kn} \, \big] := u^T\big[ \, v \otimes I_n \;~ B \, \big]$$
 and assuming that $u^T( v \otimes I_n) = 0$, we obviously obtain $[ \, w_1 \; \cdots \; w_n \, ] = 0$. Moreover,
since
$u \in \mathcal{N}_\ell(\mathcal{L}(\alpha))$, we have
$$ u^T \mathcal{L}(\alpha) = u^T [ \, v \otimes I_n \;~ B \, ]F_{\Phi}^P(\alpha) = w^T
F_{\Phi}^P(\alpha)=0.  $$
Now a similar argumentation as in the proof of Theorem \ref{thm_eetsingular} gives that $w \equiv 0$, so $u \in \mathcal{N}_\ell([ \, v \otimes I_n \;~ B \, ])$ and $[ \, v \otimes I_n \;~ B \, ]$ is singular. A contradiction since we assumed $\mathcal{L}(\lambda)$ to be a strong linearization for $P(\lambda)$ (see Theorem \ref{thm_stronglin}). On the other hand, whenever $[ \, v \otimes I_n \;~ B \, ]$ is singular, there is a vector $q \in \mathbb{R}^{kn}$ such that $q^T[ \, v \otimes I_n \;~ B \, ]=0$, so $q \in \mathcal{N}_\ell(\mathcal{L}(\alpha))$ for any $\alpha \in \mathbb{C}$.  Now clearly $u^T( v
\otimes I_n) = 0$ holds. The proof follows the same arguments when $\alpha = \infty$ using $\textnormal{rev}_1 \,
\mathcal{L}(0)$ instead of $\mathcal{L}(\alpha)$.

\end{proof}

Now Theorem \ref{thm_eet} enables us to give a statement on the recovery of left eigenvectors for regular matrix polynomials $P(\lambda)$. To this end,
suppose that $\mathcal{L}(\lambda)$ is a strong linearization for $P(\lambda)$, so (\ref{eigenvectorexclusion}) holds for any left eigenvector $u \in \mathbb{R}^{kn}$ for $\mathcal{L}(\lambda)$ for any eigenvalue $\alpha$ of $P(\lambda)$. Then from (\ref{ansatzequation}) we obtain
$$ 0 = u^T \mathcal{L}(\alpha)(\Phi_k(\alpha) \otimes I_n) = u^T(v \otimes I_n)P(\alpha) $$ and therefore, since $u^T(v \otimes I_n) \neq 0$, $u^T(v \otimes I_n)$ is a left eigenvector for $P(\lambda)$ with corresponding eigenvalue $\alpha$. In other words, Theorem \ref{thm_eet} states that for strong linearizations the mapping $u \mapsto (v \otimes I_n)u$ mapping left eigenvectors of $\mathcal{L}(\lambda)$ to left eigenvector of $P(\lambda)$ is injective for any eigenvalue $\alpha$ of $P(\lambda)$. Therefore, when $\mathcal{L}(\lambda)$ is a strong linearization for $P(\lambda)$ we obtain a bijection between the left eigenvectors of $\mathcal{L}(\lambda)$ and the left eigenvectors of $P(\lambda)$.

\begin{corollary} \label{cor:lefteigenvectors}
Let $P(\lambda)$ be an $n \times n$ regular matrix polynomial of degree $k \geq 2$ and $\mathcal{L}(\lambda) \in \mathbb{M}_1(P)$ with ansatz vector $v \in \mathbb{R}^k$ a strong linearization for $P(\lambda)$. Then any left eigenvector $w \in \mathbb{C}^n$ of $P(\lambda)$ with corresponding eigenvalue $\alpha \in \mathbb{C}$ has the form $w = u^T(v \otimes I_n)$ for some left eigenvector $u \in \mathbb{C}^{kn}$ of $\mathcal{L}(\lambda)$ with corresponding eigenvalue $\alpha$.
\end{corollary}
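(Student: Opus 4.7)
The plan is to consider the linear map
\[
T_\alpha \colon \mathcal{N}_\ell(\mathcal{L}(\alpha)) \longrightarrow \mathcal{N}_\ell(P(\alpha)), \qquad u \longmapsto (v^T \otimes I_n)\,u,
\]
(reading $u^T(v \otimes I_n)$ as a row and transposing) and to show it is a bijection. The desired form $w = u^T(v \otimes I_n)$ is exactly the statement that $T_\alpha$ is surjective, so everything reduces to verifying well-definedness, injectivity, and equality of dimensions for the domain and codomain.

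First I would check that $T_\alpha$ is well-defined: evaluating the ansatz identity $\mathcal{L}(\lambda)(\Phi_k(\lambda) \otimes I_n) = v \otimes P(\lambda)$ at $\lambda = \alpha$ and premultiplying by $u^T$ yields
\[
0 = u^T \mathcal{L}(\alpha)(\Phi_k(\alpha) \otimes I_n) = \bigl(u^T(v \otimes I_n)\bigr) P(\alpha),
\]
so indeed $u^T(v \otimes I_n) \in \mathcal{N}_\ell(P(\alpha))$. Linearity is immediate. Next, injectivity comes directly from the Eigenvector Exclusion Theorem (Theorem~\ref{thm_eet}): since $\mathcal{L}(\lambda)$ is a strong linearization, every nonzero $u \in \mathcal{N}_\ell(\mathcal{L}(\alpha))$ satisfies $u^T(v \otimes I_n) \neq 0$, so $\ker T_\alpha = \{0\}$.

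For the bijection, I would invoke the fact that a strong linearization $\mathcal{L}(\lambda)$ shares with $P(\lambda)$ the same finite eigenvalues together with the same geometric multiplicities (stated in Section~\ref{sec2}). Hence
\[
\dim \mathcal{N}_\ell(\mathcal{L}(\alpha)) = \dim \mathcal{N}_\ell(P(\alpha)),
\]
and an injective linear map between finite-dimensional vector spaces of equal dimension is surjective. Thus every left eigenvector $w$ of $P(\lambda)$ at $\alpha$ lies in the image of $T_\alpha$, which is exactly the claim.

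The only potentially subtle step is the dimension equality; the rest is bookkeeping with the ansatz equation and an appeal to Theorem~\ref{thm_eet}. Since the preservation of geometric multiplicities under strong linearization is a standard property already recorded in the preliminaries, I do not expect any genuine obstacle, and the proof should fit in a few lines.
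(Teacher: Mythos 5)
Your argument is correct and is essentially the one the paper gives in the discussion immediately preceding the corollary: well-definedness from the ansatz equation, injectivity from Theorem \ref{thm_eet}, and surjectivity from the equality of geometric multiplicities for a strong linearization. You merely make explicit the dimension-count step that the paper leaves implicit in the word ``bijection.''
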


\begin{remark} \label{rem_eigenvectorexclusion}
Certainly, a statement similar to Theorem \ref{thm_eet} holds for pencils in $\mathbb{M}_2(P)$. In particular, whenever $P(\lambda)$ is an  $n
\times n$ regular matrix polynomial of degree $k$, an analogous proof shows that a pencil $\mathcal{L}(\lambda) \in \mathbb{M}_2(P)$ with
ansatz vector $v \in \mathbb{R}^k$ is a strong linearization for $P(\lambda)$ if and only if
$$ (v^T \otimes I_n)u \neq 0 $$
holds for any right eigenvector $u \in \mathcal{N}_r(\mathcal{L}(\alpha))$ for every eigenvalue $\alpha$
of $P(\lambda)$. Obviously this is the same
condition as (\ref{eigenvectorexclusion}) using right instead of left eigenvectors. Of course Corollary \ref{cor:lefteigenvectors} holds in a similar way for $\mathbb{M}_2(P)$ as well.
\end{remark}

We may now prove one direction of the Eigenvalue Exclusion Theorem for generalized ansatz spaces without any
effort. For the monomial basis and general degree-graded bases, this
statement was proven in \cite[Sec. 6]{MacMMM06} and \cite{NakNT12} respectively.

\begin{theorem} \label{thm_eigenvalueet}
 Let $P(\lambda)$ be a regular $n \times n$ matrix polynomial of degree $k$ and assume $\mathcal{L}(\lambda) \in
\mathbb{DM}(P)$ with ansatz vector $v \in \mathbb{R}^k$. Then if $\mathcal{L}(\lambda)$ is a strong linearization
for $P(\lambda)$
no root of the polynomial
$$\Phi_k(\lambda)^Tv = \phi_{k-1}(\lambda)v_k + \phi_{k-2}(\lambda)v_{k-1} + \cdots + \phi_0(\lambda)v_1$$
coincides with an eigenvalue of $P(\lambda)$. Moreover, if $\alpha = \infty$ is an eigenvalue of $P(\lambda),$ then $v_1 \neq 0$.
\end{theorem}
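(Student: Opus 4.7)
The plan is to deploy the $\mathbb{M}_2(P)$-analogue of the Eigenvector Exclusion Theorem (Remark \ref{rem_eigenvectorexclusion}), feeding it an explicit right eigenvector of $\mathcal{L}(\lambda)$ constructed via Proposition \ref{prop_eigenvectorrecovery}. The double membership $\mathcal{L}(\lambda) \in \mathbb{DM}(P) \subseteq \mathbb{M}_1(P) \cap \mathbb{M}_2(P)$ is exactly what lets us use the $\mathbb{M}_1$-side to exhibit a right eigenvector in a very concrete form and the $\mathbb{M}_2$-side to read off the desired inequality.

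For a finite eigenvalue $\alpha$ of $P(\lambda)$, I would pick any $0 \neq u \in \mathcal{N}_r(P(\alpha))$ and form $\Phi_k(\alpha) \otimes u$. By Proposition \ref{prop_eigenvectorrecovery}.1 this vector lies in $\mathcal{N}_r(F_{\Phi}^P(\alpha))$, and by Proposition \ref{prop_eigrec1}.1 therefore also in $\mathcal{N}_r(\mathcal{L}(\alpha))$; it is nonzero because $\phi_0(\alpha) = 1$. Since $\mathcal{L}(\lambda)$ is by hypothesis a strong linearization and sits in $\mathbb{M}_2(P)$, Remark \ref{rem_eigenvectorexclusion} applies and forces
\[
(v^T \otimes I_n)(\Phi_k(\alpha) \otimes u) \;=\; \bigl(\Phi_k(\alpha)^T v\bigr)\, u \;\neq\; 0,
\]
so $\Phi_k(\alpha)^T v \neq 0$ because $u \neq 0$. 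Thus no root of the scalar polynomial $\Phi_k(\lambda)^T v$ can be a finite eigenvalue of $P(\lambda)$.

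For $\alpha = \infty$ the same strategy works using Proposition \ref{prop_eigenvectorrecovery}.2: choosing $0 \neq u \in \mathcal{N}_r(\textnormal{rev}_k P(0))$ yields $e_1 \otimes u \in \mathcal{N}_r(\textnormal{rev}_1 F_{\Phi}^P(0))$, which is contained in $\mathcal{N}_r(\textnormal{rev}_1 \mathcal{L}(0))$ by the same argument as in Proposition \ref{prop_eigrec1}.1 applied at infinity. The infinity version of Remark \ref{rem_eigenvectorexclusion} then gives
\[
(v^T \otimes I_n)(e_1 \otimes u) \;=\; v_1 u \;\neq\; 0,
\]
so $v_1 \neq 0$.

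The main obstacle I expect is essentially bookkeeping: one has to observe that the hypothesis $\mathcal{L}(\lambda) \in \mathbb{DM}(P)$ is used only through $\mathcal{L}(\lambda) \in \mathbb{M}_2(P)$ (so that Remark \ref{rem_eigenvectorexclusion} applies), and that block-symmetry is nowhere secretly required beyond this dual membership. Once that is noted, the Kronecker-product identity $(v^T \otimes I_n)(w \otimes u) = (v^T w)\,u$ disposes of both the finite and the infinite case in one line each, which is why the argument is essentially effortless, as the surrounding text announces.
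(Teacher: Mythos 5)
Your proof is correct and is essentially the paper's own argument with the roles of $\mathbb{M}_1(P)$ and $\mathbb{M}_2(P)$ interchanged: the paper applies the Eigenvector Exclusion Theorem (Theorem \ref{thm_eet}) to left eigenvectors, whose Kronecker form $\Phi_k(\alpha)^T \otimes w^T$ is supplied by the $\mathbb{M}_2$-membership, whereas you apply its $\mathbb{M}_2$-analogue (Remark \ref{rem_eigenvectorexclusion}) to right eigenvectors of the form $\Phi_k(\alpha)\otimes u$ supplied by the $\mathbb{M}_1$-membership. Since the paper itself notes immediately after its proof that this reversed-roles version works, the two arguments are duals of one another and I consider them the same approach.
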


\begin{proof}
Let $\mathcal{L}(\lambda) \in \mathbb{DM}(P)$ be a strong linearization. According to Theorem \ref{thm_eet} we
know that (\ref{eigenvectorexclusion}) holds for any $u \in
\mathcal{N}_\ell(\mathcal{L}(\alpha))$ for every eigenvalue $\alpha$ of $P(\lambda)$.
Moreover, since $\mathcal{L}(\lambda) \in \mathbb{M}_2(P)$, we know from Proposition \ref{prop_eigenvectorrecovery} that any
$u \in \mathcal{N}_\ell(\mathcal{L}(\alpha))$ has the form $u^T = \Phi_k(\alpha)^T \otimes w^T$
for some $w \in
\mathcal{N}_\ell(P(\alpha))$ (or $e_1^T \otimes w^T$ in the case $\alpha = \infty$). Therefore, according
to Theorem \ref{thm_eet}, $\mathcal{L}(\lambda)$ is a linearization for
$P(\lambda)$ if and only if
$$0 \neq \big( \Phi_k(\alpha)^T \otimes w^T \big) \big( v \otimes I_n \big) = \Phi_k(\alpha)^Tv \otimes
w^T. $$
Since $w \neq 0$ this holds if and only if $\Phi_k(\alpha)^Tv \neq 0$. If $\alpha = \infty$ is an eigenvalue of $P(\lambda)$ we obtain according to Theorem \ref{thm_eigenvalueet} $(e_1^T \otimes w^T)(v \otimes I_n) = v_1 \otimes w^T \neq 0$, so $v_1 \neq 0$.
\end{proof}

Notice that the proof of Theorem \ref{thm_eigenvalueet} would have worked using $\mathbb{M}_1(P)$ and $\mathbb{M}_2(P)$ in
reversed roles (see Remark \ref{rem_eigenvectorexclusion}).

\section{A note on other polynomial bases}
\label{sec:polbases}

We would like to emphasize that most of the results in this paper can be proven with just a few ingredients.
In fact, the results of Section 2, 3 and 4 only make use of the ansatz equation and the fact that $F_{\Phi}^P(\lambda)$ was assumed to be a strong linearization for $P(\lambda)$ that satisfies $F_{\Phi}^P(\lambda)( \Phi_k(\lambda) \otimes I_n) = e_1 \otimes P(\lambda)$. In addition, most results in Section 5, 6, 7 and 8 make particularly use of the fact, that the ''anchor pencil`` $F_{\Phi}^P(\lambda)$ has a special upper-block-Hessenberg-structure with $\lambda$ appearing only in the diagonal blocks. In this section we will shortly motivate that these properties are actually all that is necessary for most of the theory developed in this paper.

To this end, let $\Phi = \lbrace \phi_i(\lambda) \rbrace_{i=0}^{\infty}$ be any degree-graded polynomial basis with $\phi_{-1}(\lambda) = 0$ and $\phi_0(\lambda)=1$ that satisfies the recurrence relation
$$ \phi_i(\lambda) = (\lambda - \alpha_i) \phi_{i-1}(\lambda) + \sum_{j=0}^{i-2} \beta_i^j \phi_j(\lambda) \qquad i \geq 1$$
for real coefficients $\alpha_i, i \geq 1$ and $\beta_i^j$ with $i \geq 2$ and $j \leq i-1$. Furthermore let
$$P(\lambda) = \sum_{i=0}^k P_i \phi_i(\lambda)$$
be a matrix polynomial of degree $k \geq 2$.
As in Section \ref{sec3} we define $\Phi_k(\lambda) = \, [\phi_{k-1}(\lambda) \; \phi_{k-2}(\lambda) \; \cdots \; \phi_0(\lambda) \, ]^T \in \mathbb{R}[\lambda]^k$. Now consider the $(k-1) \times k$ matrix pencil
$$ M_{\Phi}(\lambda) = \begin{bmatrix} -1 & (\lambda - \alpha_{k-1}) & \beta_{k-1}^{k-3} & \beta_{k-1}^{k-4} & \cdots & \beta_{k-1}^{1} & \beta_{k-1}^0 \\
 & -1 & (\lambda - \alpha_{k-2}) & \beta_{k-2}^{k-4} & \cdots & \beta_{k-2}^1 & \beta_{k-2}^0 \\
  & & \ddots & \ddots & \ddots & & \vdots \\
  & & & & -1 & (\lambda - \alpha_2) & \beta_2^0 \\ & & & & &-1 & (\lambda - \alpha_1) \end{bmatrix} $$
and note that $M_{\Phi}(\lambda) \Phi_k(\lambda)=0$. In fact it may be proven according to \cite{BueDPSZ} that $M_{\Phi}(\lambda)$ and $\Phi_k(\lambda)^T$ are dual minimal bases. In addition, taking the matrix polynomial $P(\lambda)$ into account, we define the $n \times kn$ matrix pencil
$$ m_{\Phi}^P(\lambda) = \big[ \, (\lambda - \alpha_k)P_k + P_{k-1} \; \; \beta_k^{k-2}P_k + P_{k-2}  \; \, \cdots \; \, \beta_k^0 P_k+P_0 \, \big]. $$
As before $m_{\Phi}^P(\lambda)(\Phi_k(\lambda) \times I_n) = P(\lambda)$ holds. According to \cite{BueDPSZ}
\begin{equation} G_{\Phi}^P(\lambda) = \begin{bmatrix} m_{\Phi}^P(\lambda) \\ M_{\Phi}(\lambda) \otimes I_n \end{bmatrix} \in \mathbb{R}_1[\lambda]^{kn \times kn} \label{general_stronglin} \end{equation}
is a strong block minimal bases pencil for $P(\lambda)$ and therefore a strong linearization for $P(\lambda)$. Defining the vector space $\mathbb{M}_1(P)$ as the set of $kn \times kn$ matrix pencils that satisfy (\ref{ansatzequation}) for some ansatz vector $v \in \mathbb{R}^k$ we obtain a similar result to Theorem \ref{thm_master1}, i.e. every pencil $\mathcal{L}(\lambda)$ that satisfies (\ref{ansatzequation}) may expressed as
$$ \mathcal{L}(\lambda) = \big[ \, v \otimes I_n \;~ B \, \big] G_{\Phi}^P(\lambda)$$
for some matrix $B \in \mathbb{R}^{kn \times (k-1)n}.$ The proof works essentially as for Theorem \ref{thm_master1}.
Since $G_{\Phi}^P(\lambda)$ is a strong linearization for $P(\lambda)$, the mapping
$$\chi: \big[ \, v \otimes I_n \;~ B \, \big] \mapsto \big[ \, v \otimes I_n \;~ B \, \big] G_{\Phi}^P(\lambda)$$
will be injective, so we obtain Corollary \ref{cor_dimension}. Moreover, it is not hard to see that Corollary \ref{cor_lincondition1}, Theorem \ref{thm_stronglin} and Corollary \ref{cor_generic} still hold. For the eigenvector recovery result from Proposition \ref{prop_eigrec1} and the linearization condition from Proposition \ref{prop_eigrec2} once more nothing but the special form of $\mathcal{L}(\lambda)$ is required, so these results still hold for any other degree-graded polynomial basis.

From Section \ref{sec:singular} on the proofs presented in this paper make particularly use of the upper-block-Hessenberg-structure of $F_{\Phi}^P(\lambda)$.
Since this structure is inherited by $G_{\Phi}^P(\lambda)$ the results from Theorem \ref{thm_eetsingular}, Corollary \ref{singular_lin} and Theorem \ref{thm_stronglinsing} will still hold. Moreover, also the ideas behind the construction procedure for block-symmetric pencils presented in Section \ref{sec5} will work. We illustrate this with an example.

\begin{example}
Let $\phi_0(\lambda)=1$ and $\phi_i(\lambda) = \lambda \phi_{i-1}(\lambda) + 1$ for $i \geq 1$. This defines a nonstandard degree-graded polynomial basis consisting of the polynomials $\phi_0(\lambda)=1, \phi_1(\lambda)=\lambda+1, \phi_2(\lambda) = \lambda^2 + \lambda + 1$ and so on. Now consider an $n \times n$ matrix polynomial
$$P(\lambda) = P_4 \phi_4(\lambda) + P_3 \phi_3(\lambda) + P_2 \phi_2(\lambda) + P_1 \phi_1(\lambda) + P_0 \phi_0(\lambda)$$
of degree $k=4$. According to (\ref{general_stronglin}) $G_{\Phi}^P(\lambda)$ is given as
$$ G_{\Phi}^P(\lambda) = \begin{bmatrix} \lambda P_4 + P_3 & P_2 & P_1 & P_0 + P_4 \\ -I_n & \lambda I_n & 0 & I_n \\ 0 & -I_n & \lambda I_n & I_n \\ 0 & 0 & - I_n & (\lambda + 1)I_n \end{bmatrix}$$
which is always a strong linearization for $P(\lambda)$.\footnote{Notice that $G_{\Phi}^P(\lambda)$ does not have the tridiagonal block-structure in the lower $(k-1)n \times kn$ block as $F_{\Phi}^P(\lambda)$ due to the identity block in the position $(2,4)$.} Now, we may adapt the construction procedure for block-symmetric pencils to this situation looking for a matrix pencil
$$ \mathcal{L}(\lambda) = \begin{bmatrix} v_1 I_n & v_2 P_4 & v_3 P_4 & v_4 P_4 \\ v_2 I_n & B_{21} & B_{22} & B_{23} \\ v_3 I_n & B_{31} & B_{32} & B_{33} \\ v_4 I_n & B_{41} & B_{42} & B_{43} \end{bmatrix} G_{\Phi}^P(\lambda)$$
with $B_{22} = B_{31}$, $B_{23}=B_{41}$ and $B_{33} = B_{42}$ which is block-symmetric. The construction procedure gives
\begin{align*}
B_{21} &= -v_1P_2 + v_3P_4 v_2P_3 \\ B_{31} &= -v_1P_1 + v_4P_4 + v_3P_3 \\
B_{41} &= -v_1(P_0+P_4) - v_2P_4 - v_3P_4 -v_4P_4 + v_4P_3 \\
B_{32} &= - v_2P_1 + v_3 P_2 + B_{41} \\
B_{42} &= -v_2(P_0+P_4) - B_{21} - B_{31} - B_{41} + v_4P_2 \\
B_{43} &= -v_3(P_0+P_4) - B_{31} - B_{32} - B_{42} + v_4P_1
\end{align*}
A straightforward computation shows that $\mathcal{L}(\lambda)$ in fact becomes block-sym\-metric.
\end{example}
Finally, Theorem \ref{thm_eigenvalueet} will also hold in the context of degree-graded polynomial bases since as before nothing but the special form of $\mathcal{L}(\lambda)$ and the upper-block-Hessenberg-structure of $G_{\Phi}^P(\lambda)$ are required for the proof.

\section{Conclusion}\label{sec6}

We presented a rigorous generalization of the results obtained in \cite{MacMMM06} to orthogonal polynomial bases. Although the
extension of the concepts from \cite{MacMMM06} to nonstandard bases has already been considered, it was one of our main aims to
present the subject in a cohered and concise manner introducing some new aspects without drawing on deeper theoretical
results.
Setting up the generalized
ansatz spaces as introduced in \cite[Sec. 4.2]{MacMMM06}, we were able to characterize the elements in these spaces nicely,
obtain simple linearization conditions and prove statements on the space dimension or the genericity of
linearizations without
any effort. Moreover, we gave a condition equivalent to the full $Z$-rank condition for singular matrix polynomials. A basic and short algebraic
proof on the fact that double generalized ansatz spaces contain
entirely block-symmetric pencils using a rather surprising argument on block-skew-symmetric pencils is presented.
We
derived an intuitive procedure to construct block-symmetric pencils in generalized ansatz spaces and presented the
Eigenvector Exclusion Theorem, which is an analog of the eigenvalue exclusion theorem for non-block-symmetric
pencils. Furthermore, the proofs of the results in this paper need just a few ingredients which may make it easy
to extend them to similar results for degree-graded polynomial bases.

\section*{Acknowledgements}
Both authors would like to thank Javier  P\'erez for his valuable comments on an earlier version of this
manuscript.
The second author would like to thank D. Steven Mackey and Froil\'{a}n M. Dopico for their friendly remarks on this work at the
2016 ILAS conference in Leuven, Belgium.


\begin{thebibliography}{13}
\expandafter\ifx\csname natexlab\endcsname\relax\def\natexlab#1{#1}\fi
\expandafter\ifx\csname url\endcsname\relax
  \def\url#1{\texttt{#1}}\fi
\expandafter\ifx\csname urlprefix\endcsname\relax\def\urlprefix{URL }\fi

\bibitem[{{Amiraslani} et~al.(2009){Amiraslani}, {Corless}, and
  {Lancaster}}]{AmiCL09}
{Amiraslani}, A., {Corless}, R., {Lancaster}, P., 2009. {Linearization of
  matrix polynomials expressed in polynomial bases.} {IMA J. Numer. Anal.}
  29~(1), 141--157.

\bibitem[{{Bueno} et~al.(2015){Bueno}, {Dopico}, {Furtado}, and
  {Rychnovsky}}]{BueDFR15}
{Bueno}, M., {Dopico}, F., {Furtado}, S., {Rychnovsky}, M., 2015. {Large vector
  spaces of block-symmetric strong linearizations of matrix polynomials.}
  {Linear Algebra Appl.} 477, 165--210.

\bibitem[{Bueno et~al.(2016)Bueno, Dopico, P{\'e}rez, Saavedra, and
  Zykoski}]{BueDPSZ}
Bueno, M.~I., Dopico, F.~M., P{\'e}rez, J., Saavedra, R., Zykoski, B., 2016. {A
  unified approach to Fiedler-like pencils via strong block minimal bases
  pencils}. Available as arXiv:1611.07170v1.

\bibitem[{{De Ter\'an} et~al.(2009){De Ter\'an}, {Dopico}, and
  {Mackey}}]{TerDM09}
{De Ter\'an}, F., {Dopico}, F., {Mackey}, D., 2009. {Linearizations of singular
  matrix polynomials and the recovery of minimal indices.} {Electron. J. Linear
  Algebra} 18, 371--402.

\bibitem[{Effenberger and Kressner(2012)}]{EffK12}
Effenberger, C., Kressner, D., 2012. Chebyshev interpolation for nonlinear
  eigenvalue problems. BIT Numer. Math. 52~(4), 933--951.

\bibitem[{Fa{\ss}bender and Saltenberger(2016)}]{FassSCheb16}
Fa{\ss}bender, H., Saltenberger, P., 2016. {Some notes on the linearization of
  matrix polynomials in Tschebyscheff basis.} doi:10.13140/rg.2.1.1044.4888.

\bibitem[{{Higham} et~al.(2006){Higham}, {Mackey}, {Mackey}, and
  {Tisseur}}]{HigMMT06}
{Higham}, N.~J., {Mackey}, D.~S., {Mackey}, N., {Tisseur}, F., 2006. {Symmetric
  linearizations for matrix polynomials.} {SIAM J. Matrix Anal. Appl.} 29~(1),
  143--159.

\bibitem[{Lawrence and P{\'e}rez(2016)}]{LawP16}
Lawrence, P., P{\'e}rez, J., 2016. {Constructing strong linearizations of
  matrix polynomials expressed in the Chebyshev basis.} {To appear in SIAM J.
  Matrix Anal. Appl., Available as MIMS EPrint 2016.12, Manchester Institut for
  Mathematical Sciences, The University of Manchester, UK.}

\bibitem[{{Mackey} et~al.(2006){Mackey}, {Mackey}, {Mehl}, and
  {Mehrmann}}]{MacMMM06}
{Mackey}, D., {Mackey}, N., {Mehl}, C., {Mehrmann}, V., 2006. {Vector spaces of
  linearizations for matrix polynomials.} {SIAM J. Matrix Anal. Appl.} 28~(4),
  971--1004.

\bibitem[{Mackey et~al.(2005)Mackey, Mackey, Mehl, and Mehrmann}]{MacMMM05}
Mackey, D.~S., Mackey, N., Mehl, C., Mehrmann, V., 2005. Vector spaces of
  linearizations for matrix polynomials. Preprint, available at
  \texttt{opus4.kobv.de/opus4-matheon/frontdoor/index/index/docId/243},
  urn:nbn:de:0296-matheon-2438, Forschungszentrum Matheon Berlin.

\bibitem[{Nakatsukasa et~al.(2012)Nakatsukasa, Noferini, and
  Townsend}]{NakNT12}
Nakatsukasa, Y., Noferini, V., Townsend, A., 2012. Vector spaces of
  linearizations for matrix polynomials: a bivariate polynomial approach. {To
  appear in SIAM J. Matrix Anal. App.}, Available as MIMS EPrint 2012.118,
  Manchester Institut for Mathematical Sciences, The University of Manchester,
  UK.

\bibitem[{Noferini and {P\'e}rez(2016)}]{NofP15}
Noferini, V., {P\'e}rez, J., 2016. {Fiedler-comrade and Fiedler-Chebyshev
  pencils.} {SIAM J. Matrix Anal. Appl.} 37~(4), 1600--1624.

\bibitem[{Robol et~al.(2016)Robol, Vandebril, and {Van Dooren}}]{RobVVD16}
Robol, L., Vandebril, R., {Van Dooren}, P., 2016. A framework for structured
  linearizations of matrix polynomials in various bases. {To appear in SIAM J.
  Matrix Anal. App., Available as arXiv:1603.05773v2.}

\end{thebibliography}
\end{document}